\documentclass[11pt,letterpaper]{article}
\pdfoutput=1 
\usepackage[margin=3.45cm]{geometry}
\usepackage{amsmath, amssymb, amsthm}
\usepackage{graphicx, color}
\usepackage{array, multirow}
\usepackage[font=small,labelfont=bf]{caption} 
\usepackage{lipsum}
\usepackage{amsfonts}
\usepackage{tabularx}
\usepackage{booktabs}
\usepackage{graphicx}
\usepackage{epstopdf}
\usepackage{algorithmic}
\usepackage{tikz}
\ifpdf
  \DeclareGraphicsExtensions{.eps,.pdf,.png,.jpg}
\else
  \DeclareGraphicsExtensions{.eps}
\fi

\usepackage[shortlabels]{enumitem}

\clubpenalty10000
\widowpenalty10000


\usepackage[sort]{natbib}

\setcitestyle{numbers,square,comma}
\setlength{\bibsep}{4pt plus 8pt}

\usepackage{hyperref}
\hypersetup{colorlinks=true,
			urlcolor=blue,
			linkcolor=blue,
			citecolor=blue,
			bookmarksdepth=paragraph}
\usepackage[nameinlink]{cleveref}
\crefname{equation}{}{}
\crefname{section}{section}{sections}
\crefname{figure}{figure}{figures}
\crefname{table}{table}{tables}
\crefname{example}{example}{examples}
\crefname{proposition}{proposition}{propositions}
\Crefname{assumption}{assumption}{assumptions}
\Crefname{section}{Section}{Sections}
\Crefname{figure}{Figure}{Figures}
\Crefname{table}{Table}{Tables}
\Crefname{definition}{Definition}{Definitions}
\Crefname{theorem}{Theorem}{Theorems}
\Crefname{remark}{Remark}{Remarks}
\Crefname{example}{Example}{Examples}
\Crefname{proposition}{Proposition}{Propositions}
\Crefname{assumption}{Assumption}{Assumptions}
\numberwithin{equation}{section}

\newtheorem{theorem}{Theorem}[section]

\newtheorem{proposition}{Proposition}[section]
\theoremstyle{definition}

\newtheorem{remark}{Remark}[section]
\newtheorem{assumption}{Assumption}[section]


\title{Data-driven Discovery of Invariant Measures}
\author{Jason J.\ Bramburger$^1$, Giovanni Fantuzzi$^2$}
\date{
	$^1$Department of Mathematics and Statistics, Concordia University,\\ Montr\'eal, QC, Canada
	\\
	$^2$Department of Mathematics, Friedrich--Alexander--Universit\"at Erlangen--N\"urnberg, Germany
}

\begin{document}

\maketitle

\begin{abstract}
	\noindent
Invariant measures encode the long-time behaviour of a dynamical system. In this work, we propose an optimization-based method to discover invariant measures directly from data gathered from a system. Our method does not require an explicit model for the dynamics and allows one to target specific invariant measures, such as physical and ergodic measures. Moreover, it applies to both deterministic and stochastic dynamics in either continuous or discrete time. We provide convergence results and illustrate the performance of our method on data from the logistic map and a stochastic double-well system, for which invariant measures can be found by other means. We then use our method to approximate the physical measure of the chaotic attractor of the R\"ossler system, and we extract unstable periodic orbits embedded in this attractor by identifying discrete-time periodic points of a suitably defined Poincar\'e map. This final example is truly data-driven and shows that our method can significantly outperform previous approaches based on model identification.
\end{abstract}

\section{Introduction}
Dynamical system analysis traditionally takes one of two complementary approaches.
The first is to study individual system trajectories in order to identify fundamental structures such as steady-states, periodic orbits, heteroclinic connections, and chaotic attractors. The second approach is to understand the statistical behaviour of the system by examining the evolution of measures under the dynamics. Paramount in the study of measure-theoretic properties of dynamical systems is the identification of \emph{invariant} measures, which are preserved by the flow of the system. Invariant measures lie at the centre of ergodic theory and provide information regarding the long-time behaviour of a system.
Indeed, ergodic theorems allow one to calculate infinite-time averages as expectations with respect to invariant measures~\cite{brin2002introduction}, transferring one from the realm of dynamics to one of probability theory.

In this paper, we describe an optimization-based approach to extract invariant measures from dynamic data. Such a problem goes back at least to the work of Ulam \cite{ulam1960collection}, who proposed discretizing the phase space into a finite collection of disjoint sets and using the data to approximate transition probabilities between these sets \cite{goswami2018constrained,eDMDconv2,klus2016towards,dellnitz1999approximation}. This method produces a square stochastic matrix that approximates the Perron--Frobenius operator, that is, the linear operator describing how probability measures evolve under the system dynamics (see \cref{PerronFrobenius} below for a precise definition). By the Perron--Frobenius theorem, this matrix has a positive eigenvector with eigenvalue 1 that approximates the density of an invariant measure of the system. However, the computational complexity of Ulam's original method grows cubically with the dimension of the phase space and, even though recent variations of the method exhibit quadratic scaling \cite{gerber2018scalable}, it quickly becomes prohibitive. Moreover, accurate approximations of invariant measures require fine discretizations of the phase space and a considerable amount of data.   

An alternative approach to approximating invariant measures from data rests on the fact that the Perron--Frobenius operator is the formal adjoint of the Koopman operator. The Koopman operator is linear and acts on real-valued functions, often called \emph{observables}, through composition with the flow of a dynamical system. Its action can be approximated on finite collections of observables, termed \emph{dictionaries}, using extended dynamic mode decomposition (EDMD) \cite{eDMD}. This method produces a square matrix that approximates the Koopman operator and any left eigenvector with eigenvalue close to 1 approximates the density of an invariant measure of the underlying dynamical system \cite{eDMDconv2,klus2016towards,goswami2018constrained}. Despite EDMD's rigorous convergence guarantees \cite{bramburger2023auxiliary,korda2018convergence,eDMD}, however, the existence of nonnegative eigenvectors with unit eigenvalues cannot be guaranteed. Thus, contrary to Ulam's method, one must either settle for signed densities coming from eigenvalues near 1, or augment EDMD to produce a column-wise stochastic approximate Koopman operator matrix \cite{gerber2018scalable}.

In this work, we propose to approximate invariant measures by combining EDMD with convex optimization. Precisely, we first employ a variation of the EDMD method from~\cite{bramburger2023auxiliary} to produce a rectangular (rather than square) matrix approximation for the action of the Koopman operator on a dictionary of polynomial observables. Using this approximation, we then provide a data-driven implementation of the convex optimization framework for studying invariant measures developed in \cite{Korda2021invariant} for dynamical systems with known polynomial governing equations (see also \cite{bose2007duality,bose2005minimum} for a different model-based implementation of the same framework, which allows for non-polynomial governing equations at the expense of other assumptions on the system's invariant measures). This strategy has two major advantages compared both to Ulam's method and to alternative EDMD-based techniques. First, we can easily target different invariant measures if more than one exists for the dynamical systems generating the data. In particular, we can search for \emph{ergodic} invariant measures, which are extreme points of the set of all invariant measures in the sense of convex analysis. Second, we are not limited to discovering densities of absolutely continuous invariant measures. In particular, we are able to discover singular measures supported  on invariant trajectories for the underlying dynamical system. 

The ability of our approach to discover extreme singular invariant measures from data opens up new possibilities for the analysis of dynamical systems. For example, extreme singular invariant measures for chaotic discrete dynamical systems are often supported on fixed points or on unstable periodic orbits (UPOs) that densely fill the system's attractor. Thus, our approach provides a way of systematically extracting UPOs from chaotic maps. We demonstrate this by extracting both short- and long-period UPOs for the continuous-time chaotic R\"ossler system using only data from a one-dimensional Poincar\'e section. Precisely, we first use our data-driven technique to extract extreme atomic measures supported on UPOs of the Poincar\'e return map of period up to 19. Then, we recover the corresponding continuous-time UPOs for the R\"ossler system. This strategy outperforms data-driven methods that first identify a model for the Poincar\'e map data \cite{bramburger2020poincare,bramburger2021data,bramburger2021deep}, because small inaccuracies in the identified model propagate to forward iterations and make it difficult to discover long-period UPOs. There are also advantages compared to related methods for extracting UPOs from approximate invariant measures in continuous-time \cite{lakshmi2020finding,lakshmi2021finding}, as these rely on the target UPO being well approximated by the zero level set of a nonnegative polynomial. This is usually not the case unless the polynomial degree is large, because continuous-time UPOs are not algebraic curves in general. In contrast, the corresponding UPO for the Poincar\'e map generates  an atomic measure, whose atoms can be represented exactly as the zeros of a nonnegative polynomial. The Poincar\'e map formulation is therefore better suited to algebraic methods and our data-driven approach allows one to implement it even when, as is typical, the Poincar\'e map is not known.

To make this article self-contained, \cref{sec:InvMeas} introduces invariant measures and reviews in detail the optimization framework for their approximation developed in \cite{Korda2021invariant}. \Cref{sec:InvMeasData} shows how to implement this framework in a data-driven setting by using EDMD, while convergence results for dynamical systems governed by unknown polynomial maps are proved in \cref{sec:Convergence}. In these sections we focus on discrete-time deterministic dynamics for simplicity, but we comment on extensions to continuous-time and stochastic dynamics in \cref{sec:Extensions}. \Cref{s:examples} demonstrates the power of our data-driven approach on a series of examples. Final remarks and open questions are offered in \cref{s:conclusion}.

\section{Invariant Measures}\label{sec:InvMeas}

We begin by reviewing invariant measures for discrete-time dynamical systems, as well as existing model-based optimization tools for their approximation. As mentioned above, these tools are well known (see, e.g., \cite{Henrion2012semidefinite,Chernyshenko2014rsta,Fantuzzi2016siads,Kuntz2016bounding,Magron2017invariant,Korda2021invariant,bose2007duality,bose2005minimum}). Here we closely follow~\cite{Korda2021invariant} and refer readers to this work for further details and convergence proofs.

\subsection{Background and Definitions}

Let $X\subset\mathbb{R}^n$ be a compact set and, given a continuous function $f:X \to X$, consider the discrete-time dynamical system governed by
\begin{equation}\label{DiscreteDS}
	x_{n+1} = f(x_n).
\end{equation}
A nonnegative Borel measure $\mu$ supported on $X$ is \emph{invariant} for the mapping $f$ if  
\begin{equation}\label{InvMeas}
	\mu(f^{-1}(A)) = \mu(A)
\end{equation}
for all Borel measurable sets $A \subseteq X$. Equivalently, $\mu$ is invariant if
\begin{equation}\label{InvMeasInt}
	\int_X [g\circ f - g]\mathrm{d}\mu = 0
\end{equation} 
for all functions $g \in C(X)$. We assume throughout that $\mu$ is in the space $\Pr(X)$ of probability measures on $X$, meaning $\mu(X) = 1$. The set of all invariant probability measures for the mapping $f$  is convex and its extreme points are the ergodic measures.

Condition \cref{InvMeasInt} has a deep connection to the Koopman operator and its adjoint, the Perron--Frobenius operator. The Koopman operator associated to $f$ is the linear operator $\mathcal{K}:C(X) \to C(X)$ encoding the composition of continuous functions with $f$, i.e.,
\begin{equation}
	\mathcal{K}g := g \circ f.
\end{equation} 
The Perron--Frobenius operator, instead, is a linear operator $\mathcal{P}$ on the space of measures that associates a measure $\mu$ to its pushforward by $f$, i.e.,
\begin{equation}\label{PerronFrobenius}
	(\mathcal{P}\mu)(A) = \mu(f^{-1}(A)).
\end{equation}
It should be clear that \cref{InvMeasInt} can be written in terms of the Koopman operator as
\begin{equation}\label{InvMeasInt2}
	\int_X [\mathcal{K}g- g]\mathrm{d}\mu = 0,	
\end{equation}
while \cref{InvMeas} can be expressed as $\mathcal{P}\mu = \mu$. In other words, invariant measures are eigenfunctions of the Perron--Frobenius operator with eigenvalue $1$.

\subsection{Convex Computation of Invariant Measures}

Invariant measures for a known polynomial map $f$ can be approximated using a convex optimization framework developed in \cite{Korda2021invariant}, which we now review in detail as it forms the basis of our data-driven method. 

Fix an integer $d\geq 1$. For this section, and this section only, we assume the following.

\begin{assumption}\label{ass:poly-f}
	The function $f:X\to X$ is a polynomial of total degree $\leq d$.
\end{assumption}

\noindent
We also introduce the following assumption about the set $X$.

\begin{assumption}\label{ass:semiagebraic-X}
	$X \subset \mathbb{R}^n$ is a compact set such that:
	\begin{enumerate}[topsep=1ex,leftmargin=2\parindent, labelsep=*]
		\item $X = \{x\in\mathbb{R}^n|\ \sigma_1(x) \geq 0,\ldots,\sigma_J(x) \geq 0\}$ for finitely many polynomials $\sigma_1,\dots,\sigma_J$ of degree $\leq d$.
		\item There exists an integer $R$ and polynomials $s_0,\ldots,s_J$ that are sums of squares of other polynomials such that $R^2-|x|^2 = s_0(x) + s_1(x)\sigma_1(x) + \cdots + s_J(x)\sigma_J(x)$.
	\end{enumerate} 
\end{assumption}

\noindent
Sets satisfying the first condition in \cref{ass:semiagebraic-X} are called {\em basic semialgebraic}. Examples include boxes and balls, but the definition is sufficiently general to include more complicated shapes, both convex and non-convex. The second condition in \cref{ass:semiagebraic-X}, instead, is a technical requirement for the convergence results stated in \cref{th:convergence} below. This requirement is mild when $X$ is compact, because one can always add the inequality $R^2-|x|^2 \geq 0$ to the definition of $X$ with $R$ sufficiently large not to change the set.

\subsubsection{Moment-based Characterization of Invariance}
\label{ss:inv-via-moments}
Since $X$ is compact, any measure $\mu$ on $X$ is uniquely determined by its moments 
\begin{equation}
	y_\alpha = \int_X x^\alpha \mathrm{d}\mu,
\end{equation}
where $x^\alpha = x_1^{\alpha_1}\cdots x_n^{\alpha_n}$ is the monomial in $x$ with exponent $\alpha = (\alpha_1,\dots,\alpha_n) \in \mathbb{N}_0^n$. We write $\vert \alpha \vert=\alpha_1+\cdots+\alpha_n$ for the degree of $x^\alpha$. The first step to study invariant measures via convex optimization is to encode the invariance of a measure using linear constraints on its moments.  

Since polynomials are dense in $C(X)$, we conclude from \cref{InvMeasInt} that a measure $\mu$ supported on $X$ is invariant if and only if
\begin{equation}\label{InvMeasMoments}
	\int_X \left[ (f(x))^\alpha - x^\alpha \right]\mathrm{d}\mu = 0
\end{equation} 
for every $\alpha \in \mathbb{N}_0^n$.
\Cref{ass:poly-f} implies that $(f)^\alpha$ is a polynomial of degree $\vert \alpha \vert d$. Thus, for each exponent $\alpha$, condition \cref{InvMeasMoments} provides a {\em linear} constraint involving the (finitely many) moments of $\mu$ of degree up to $\vert \alpha \vert d$. Letting $\alpha$ vary over $\mathbb{N}_0^n$, we conclude that $\mu$ is an invariant measure if and only if the corresponding moment sequence $y = \{y_\alpha\}_{\alpha \in \mathbb{N}_0^n}$ satisfies an infinite set of linear equality constraints. We write such constraints as
\begin{equation}\label{InvMeasKernel}
	\mathcal{A}y = 0,
\end{equation}  
where $\mathcal{A}:\mathbb{R}^\infty \to \mathbb{R}^\infty$ is an infinite-dimensional linear operator. We also require that 
\begin{equation}\label{InvMeasNormalize}
	y_0 = 1
\end{equation}
because we focus on probability measures.

\subsubsection{Optimization over Invariant Measures}
\label{sec:Objectives}

We have seen above that finding an invariant measure is tantamount to finding a sequence $y \in \mathbb{R}^\infty$ whose elements are moments of a measure and satisfy \cref{InvMeasKernel,InvMeasNormalize}. These conditions have multiple solutions if the dynamical system \cref{DiscreteDS} admits multiple invariant measures. To try and single out an invariant measure with specific properties, one can optimize a cost function $F(y)$. Following \cite{Korda2021invariant}, we let $F(y)$ be a convex and lower semicontinuous function that depends only on the first $\binom{n+d}{d}$ entries of $y$, which correspond to the moments of degree up to $d$.
This leads to the convex optimization problem
\begin{equation}\label{MeasureLP}
	\min_{\substack{y \in \mathbb{R}^\infty\\ \mu \in \Pr(X)}} \ F(y) \quad \text{such that}\quad \begin{cases}
		\mathcal{A}y = 0, \\
		y_0 = 1, \\
		y_\alpha = \int_X x^\alpha \,\mathrm{d}\mu \quad 
		\forall \alpha\in\mathbb{N}_0^n.
	\end{cases}
\end{equation} 

Examples of useful cost functions are discussed in \cite{Korda2021invariant}. Here, we will consider only cost functions that promote the identification of physical measures and ergodic measures.

\paragraph{Physical Measures.} A measure $\mu$ is said to be \emph{physical} if
\begin{equation}
	\int_X g \, \mathrm{d}\mu = \lim_{N \to \infty} \frac{1}{N}\sum_{n = 0}^{N-1} g(f^n(x))
\end{equation}
for Lebesgue-almost-every point $x\in X$ and every $g\in C(X)$.
Taking $g(x) = x^\alpha$ and fixed $N \gg 1$, the moments of a physical measure $\mu$ satisfy
\begin{equation}\label{ApproxMoment}
	y_\alpha \approx \frac{1}{N}\sum_{n = 0}^{N-1} g(f^n(x_0)) =: \tilde{y}_\alpha
\end{equation}
for Lebesgue-almost-every point $x_0 \in X$. Then, to identify $\mu$, one can compute $\tilde{y}_\alpha$ for some set $\mathcal{I}$ of exponents $\alpha$ with degree $\vert \alpha \vert \leq d$ and solve \cref{MeasureLP} with cost function
\begin{equation}\label{e:phys-meas-cost}
	F(y) = \sum_{\alpha \in \mathcal{I}} (y_\alpha - \tilde{y}_\alpha)^2.
\end{equation}

\paragraph{Ergodic Measures.} An invariant probability measure $\mu \in \Pr(X)$  for \cref{DiscreteDS} is said to be \emph{ergodic} if every invariant set has either full or zero measure, that is, $f^{-1}(S)=S$ implies $\mu(S)\in\{0,1\}$.  Ergodic measures are extreme points of the set of all invariant probability measures of $f$. Since the minima of linear programs are attained at extreme points, one can target an ergodic measure by solving \cref{MeasureLP} with a {\em linear} cost function $F$.

\subsubsection{Approximation via Semidefinite Programming}
\label{ss:sdp-relax}

The minimization problem in \cref{MeasureLP} is convex and returns an optimal invariant measure for a given cost function, but it is infinite-dimensional and thus computationally intractable. We now explain how to approximate it with a hierarchy of semidefinite programs (SDP), which are tractable optimization problems with positive semidefiniteness constraints on matrices that depend linearly on the optimization variables.

The first step is to replace the infinite sequence $y$ with a finite vector, also denoted $y$ with a slight abuse of notation. Fix an integer $k \geq 1$. Rather than the infinite set of equality constraints in~\cref{InvMeasKernel}, which are obtained by imposing \cref{InvMeasMoments} for all exponents $\alpha\in\mathbb{N}_0^n$, we enforce this condition only for the $\binom{n+k}{k}$ exponents with degree $\vert \alpha \vert \leq k$. This gives $\binom{n+k}{k}$ constraints for the vector $y$ of $\binom{n+kd}{kd}$ moments of $\mu$ of degree up to $kd$, written compactly as
\begin{equation}\label{InvMeasKernelFinite}
	A_ky = 0
\end{equation} 
for an explicitly computable matrix $A_k \in \mathbb{R}^{\binom{n + k}{k}\times \binom{n + dk}{dk}}$. Observe that $A_k$ has a high-dimensional kernel when $k$ is large and $f$ is nonlinear ($d\geq 1$). 

Next, we seek to identify elements in the kernel of $A_k$ that correspond to truncated moment sequences of probability measures on $X$. Unfortunately, this problem is intractable in general. Following a standard relaxation strategy for moment problems (see \cite{laurent2009sums,deKlerk2018} and references therein), we therefore limit ourselves to identifying kernel elements $y$ that make the \emph{moment matrices} $M(\sigma_j y)$, $j = 0,1,\dots, J$, positive definite. Setting $\sigma_0=1$, these matrices are defined by
\begin{equation}\label{e:moment-matrices-def}
	M(\sigma_j y) = \int_X \sigma_j(x)p_{\gamma_j}(x)p_{\gamma_j}(x)^\intercal \mathrm{d}\mu, \qquad j = 0,1,\dots,J,
\end{equation}	
where $\gamma_j = \lfloor (dk - \deg \sigma_j) / 2 \rfloor$ and $p_\gamma(x)$ is the vector of monomials in $x$ up to degree $\gamma$. The square matrix $M(\sigma_j y)$ is therefore of size $\binom{n + \gamma_j}{\gamma_j} \times \binom{n + \gamma_j}{\gamma_j}$. We also impose the normalization condition $y_0=1$ to arrive at the following tractable \emph{semidefinite program} (SDP):
\begin{equation}\label{MeasureSDP}
	\min_{y \in \mathbb{R}^{\binom{n + k}{k}}} \ F(y) \quad \text{such that}\quad \begin{cases}
		A_k y = 0 \\
		y_0 = 1 \\
		M(\sigma_jy) \succeq 0, & j = 0,1,\dots,J.
	\end{cases}
\end{equation} 

This SDP is a relaxation of \cref{MeasureLP} in the sense that its feasible set contains all moment vectors of all invariant measures for the discrete-time system \cref{DiscreteDS}. In general, however, it also contains elements that are not moment vectors, so the optimal $y$ need not be a vector of moments of an invariant measure. Nevertheless, one can prove that optimal solutions to \cref{MeasureSDP} converge (up to a subsequence) to moment sequences of optimal invariant measures for \cref{MeasureLP}. 

\begin{theorem}[{\cite[Theorem~2]{Korda2021invariant}}]\label{th:convergence}
	Suppose \cref{ass:poly-f,ass:semiagebraic-X} hold. Let $y^k \in \mathbb{R}^{\binom{n+kd}{kd}}$ be the optimal solution of \cref{MeasureSDP} for fixed $k \in \mathbb{N}$, and let $\mu$ be optimal for \cref{MeasureLP}. There exists an increasing sequence $\{k_j\}_{j \geq 1} \subset \mathbb{N}$ such that
	\begin{equation*}
		\lim_{j \to \infty} y^{k_j}_\alpha = \int_X x^\alpha \, \mathrm{d}\mu \qquad \forall \alpha \in \mathbb{N}_0^n.
	\end{equation*}
	In particular, if \cref{MeasureLP} has a unique optimizer $\mu$, then $y_\alpha^k \to \int x^\alpha \,\mathrm{d}\mu$ as $k\to \infty$.
\end{theorem}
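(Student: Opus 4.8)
The plan is to combine an a priori compactness estimate for the feasible moment vectors of \cref{MeasureSDP} with Putinar's Positivstellensatz, showing that subsequential limits of the SDP optimizers are moment sequences of optimal invariant measures. First I would use the second part of \cref{ass:semiagebraic-X} to bound the moments uniformly in the relaxation level $k$: pairing the identity $R^2-|x|^2=s_0+\sum_j s_j\sigma_j$ with the localizing constraints $M(\sigma_j y)\succeq 0$ gives $L_y(|x|^2 p^2)\le R^2 L_y(p^2)$ for the Riesz functional $L_y(p)=\sum_\alpha p_\alpha y_\alpha$, and a short induction yields the standard bound $0\le y_{2\beta}\le R^{2|\beta|}$ (using $y_0=1$); the nonnegative $2\times 2$ minors $y_{2\beta}y_{2\beta'}\ge y_{\beta+\beta'}^2$ of $M(y)\succeq 0$ then give $|y_\alpha|\le R^{|\alpha|}$ for all $\alpha$. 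Since this bound is independent of $k$, for each fixed $\alpha$ the scalar sequence $\{y^k_\alpha\}_k$ is bounded, and a diagonal extraction produces an increasing sequence $\{k_j\}_{j\ge 1}$ and an infinite sequence $y^\star=\{y^\star_\alpha\}_{\alpha\in\mathbb{N}_0^n}$ with $y^{k_j}_\alpha\to y^\star_\alpha$ for every $\alpha$.

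Next I would realize $y^\star$ as a measure. For each fixed matrix order the constraint $M(\sigma_j y^{k_j})\succeq 0$ holds once $k_j$ is large enough, and entrywise convergence preserves semidefiniteness of a fixed-size matrix, so $M(\sigma_j y^\star)\succeq 0$ for all $j$ and all orders. Because the quadratic module generated by the $\sigma_j$ is Archimedean by \cref{ass:semiagebraic-X}, Putinar's theorem guarantees that $y^\star$ is the moment sequence of a nonnegative Borel measure $\mu^\star$ supported on $X$, and $y^\star_0=1$ makes $\mu^\star\in\Pr(X)$. To see that $\mu^\star$ is invariant, fix $\alpha$: for $k_j\ge|\alpha|$ the row of $A_{k_j}y^{k_j}=0$ indexed by $\alpha$ is precisely the linear relation \cref{InvMeasMoments} among finitely many entries of $y^{k_j}$, so passing to the limit gives $\int_X[(f(x))^\alpha-x^\alpha]\,\mathrm{d}\mu^\star=0$ for every $\alpha$. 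Hence $\mu^\star$ is feasible for \cref{MeasureLP}.

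Finally I would establish optimality. Every invariant probability measure is feasible for \cref{MeasureSDP} at every level $k$ (its moment matrices are genuine and hence positive semidefinite, and it satisfies the truncated invariance and normalization constraints), so \cref{MeasureSDP} relaxes \cref{MeasureLP} and $F(y^k)\le F(y_\mu)$, where $y_\mu$ is the moment sequence of the optimal measure $\mu$. Since $F$ is lower semicontinuous and depends only on the moments of degree $\le d$, entrywise convergence yields $F(y^\star)\le\liminf_{j}F(y^{k_j})\le F(y_\mu)$. As $\mu^\star$ is feasible and $\mu$ optimal, also $F(y^\star)\ge F(y_\mu)$, whence $F(y^\star)=F(y_\mu)$ and $\mu^\star$ is itself optimal; this is the measure we may take as $\mu$ in the statement. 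If the optimizer of \cref{MeasureLP} is unique, then every subsequential limit obtained this way equals its moment sequence, and the standard subsequence argument (every subsequence of $\{y^k\}$ has a further subsequence converging to this common limit) upgrades the conclusion to convergence of the full sequence.

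The main obstacle is the middle step, namely converting the limit of finitely many algebraic semidefiniteness conditions into the existence of a genuine representing measure. This is exactly where Putinar's Positivstellensatz and the Archimedean hypothesis in \cref{ass:semiagebraic-X} are indispensable: without compactness of the quadratic module, the limiting sequence $y^\star$ need not be a moment sequence at all, and the uniform bound $|y_\alpha|\le R^{|\alpha|}$ underlying the diagonal extraction would likewise be unavailable.
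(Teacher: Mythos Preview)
Your proposal is correct and follows essentially the same route as the paper. The paper does not give a full proof of this theorem (it is cited from \cite{Korda2021invariant}), but the brief sketch it provides for the closely related \cref{th:convergence-extended} matches your argument: uniform moment bounds from the Archimedean condition in \cref{ass:semiagebraic-X}, diagonal extraction, Putinar's theorem to obtain a representing measure on $X$, and the equality constraints to conclude invariance; your additional optimality step via lower semicontinuity of $F$ and the relaxation inequality $F(y^k)\le F(y_\mu)$ is the standard completion of that sketch.
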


Finally, we remark that the SDP \cref{MeasureSDP} can be formulated and solved even when the set $X$ does not satisfy the second condition in \cref{ass:semiagebraic-X} and, in particular, when $X=\mathbb{R}^n$ is the full space. In such cases, however, it is currently not know if the convergence results from \cref{th:convergence} always apply. Fortunately, convergence if often observed in examples (see, e.g., \cite{Chernyshenko2014rsta,Fantuzzi2016siads,goluskin2018bounding,goluskin2019kuramoto}).

\subsubsection{Recovery of an Approximate Invariant Measure}
\label{ss:recovery}
Solutions of the SDP \cref{MeasureSDP} for increasing $k$ recover, possibly after passing to a subsequence, the moments of an optimal invariant measure $\mu$ for \cref{MeasureLP}. We now review how to approximate $\mu$ using a sequence of \emph{signed} measures $\mu_k$ constructed from the optimal solutions $y^k$ of \cref{MeasureSDP}. For notational ease, we assume we have already passed (without relabelling) to a convergent subsequence of optimal solutions as in \cref{th:convergence}.

First, fix a finite measure $\pi$ (not necessarily a probability measure) whose support is contained in $X$ and such that the associated moment matrix
\begin{equation}
	M_\pi := \int_X p_{k}(x)p_{k}(x)^\intercal \, \mathrm{d}\pi
\end{equation}
is computable for every integer $k$. We assume $\pi$ is supported on a full-dimensional or non-algebraic subset of $X$, so as to ensure $M_\pi$ is invertible. In the examples of \cref{s:examples} we will take $\pi$ to be the Lebesgue measure on $X$, but here we work with general $\pi$ to highlight that other choices are possible.

Next, consider the degree-$k$ polynomial
\begin{equation}
	\rho_k(x) = p_{k}(x)^\intercal \, M_\pi^{-1}y^k
\end{equation}
and define a $\pi$-absolutely-continuous signed measure $\mu_k$ via
\begin{equation}
	\mu_k(A) = \int_A \rho_k(x) \, \mathrm{d}\pi \qquad \forall A \subset X.
\end{equation}
This measure satisfies $\mu_k(X)=1$ by construction. Note, however, that $\mu_k$ is a \emph{signed} measure in general because $\rho_k$ need not be nonnegative on the support of $\pi$. Nevertheless, a straightforward extension of arguments in \cite{Korda2021invariant} shows that $\mu_k$ converges to an optimal invariant measure $\mu$ for \cref{MeasureLP} in the weak-star topology of $\Pr(X)$, meaning that
\begin{equation}\label{e:weak-conv}
	\lim_{k \to \infty} \int_X g(x)\rho_k(x) \, \mathrm{d}\pi = \int_X g(x)\,\mathrm{d}\mu
\end{equation}
for every continuous function $g:X \to \mathbb{R}$. In other words, as the polynomial degree $k$ increases, the signed measures $\mu_k$ enable one to approximate as accurately as desired the expectation with respect to the invariant measure $\mu$ of any fixed function $g$. Moreover, \cref{e:weak-conv} implies that if $\mu$ is $\pi$-absolutely-continuous with density $\rho$, then the signed densities $\rho_k$ converge to $\rho$ in the weak topology of $L^1(\pi)$.

\section{Discovering Extremal Invariant Measures from Data}\label{sec:InvMeasData}

In the convex optimization approach to identifying invariant measures reviewed in \cref{sec:InvMeas}, the key ingredient is the matrix $A_k$. This matrix is the only component of the optimization problem \cref{MeasureSDP} that encodes the system dynamics and it is built by considering the action of the Koopman operator, $\mathcal{K}g = g \circ f$, on monomials $g(x)=x^\alpha$ of degree $\vert \alpha \vert\leq k$. Recent advances in data-driven analysis of dynamical systems make it possible to approximate the action of the Koopman operator on finite-dimensional spaces of functions, called \emph{observables}, using only data gathered from the system \cite{eDMD,klus2016towards,klus2020data}. Here, we show how to use these data-driven methods together with the computational approach from \cref{sec:InvMeas} to discover invariant measures entirely from data. We henceforth assume we do not know the map $f$ in \cref{DiscreteDS}, but we have a set of $m$ data snapshots $\{ (x_i,z_i) \}_{i = 1}^m \subset X \times X$ where $z_i = f(x_i)$. We do not assume that $f$ is polynomial, but still require $X$ to be a basic semialgebraic set according to the first condition in \cref{ass:semiagebraic-X}.

Fix integers $k$ and $\ell$ with $1 \leq k \leq \ell$, and let $p_k(x)$ and $p_\ell(x)$ be the vectors of monomials in $x$ up to degrees $k$ and $\ell$, respectively. Let $k_x = \binom{n + k}{k}$ and $\ell_x = \binom{n + \ell}{\ell}$ denote the size of these vectors, respectively, and consider the matrices 
\begin{align}\label{EDMDmatrices}
	P &= \begin{bmatrix}
		| &  & | \\
		p_\ell(x_1) & \cdots & p_\ell(x_m) \\
		| &  & | 
	\end{bmatrix} \in \mathbb{R}^{\ell_x \times m}
	\intertext{and}
	Q &= \begin{bmatrix}
		| &  & | \\
		p_k(z_1) & \cdots & p_k(z_m) \\
		| &  & | 
	\end{bmatrix} \in \mathbb{R}^{k_x\times m}.
\end{align} 
Extended dynamic mode decomposition (EDMD) seeks to approximate the action of the Koopman operator on $\mathrm{span}\{p_k(x)\}$ using a linear operator $\mathcal{K}_{k\ell}:\mathrm{span}\{p_k(x)\}\to \mathrm{span}\{p_\ell(x)\}$ called the {\em approximate Koopman operator}. The matrix $K$ representing this operator is obtained by minimizing the Frobenius matrix norm $\|Q - KP\|_F$.
The optimal solution is
\begin{equation}
	K_{k\ell} := QP^\dagger \in \mathbb{R}^{k_x \times \ell_x}
\end{equation}
where the superscript $\dagger$ denotes the Moore--Penrose pseudoinverse. The approximate Koopman operator then acts on elements $g(x) = c \cdot p_k(x)$ in $\mathrm{span}\{p_k(x)\}$ by 
\begin{equation}
	\mathcal{K}_{k\ell} \,g(x) := c\cdot K_{k\ell} \,p_\ell(x).
\end{equation} 

Next, let $\Theta_{k\ell} \in \mathbb{R}^{k_x \times \ell_x}$ be the matrix that extracts the monomials in $p_k(x)$ from $p_\ell(x)$, meaning that $p_k(x) = \Theta_{k\ell} p_\ell(x)$. Define the matrix
\begin{equation}
	L_{k\ell} := K_{k\ell} - \Theta_{k\ell} \in \mathbb{R}^{k_x \times \ell_x}.
\end{equation}
The {\em approximate Lie derivative} $\mathcal{L}_{k\ell}$ is a linear operator from $\mathrm{span}\{p_k(x)\}$ to $\mathrm{span}\{p_\ell(x)\}$ that acts on any function $g(x) = c \cdot p_k(x)$ in $\mathrm{span}\{p_k(x)\}$ by 
\begin{equation}\label{e:approx-lie}
	\mathcal{L}_{k\ell}\,g(x) = c\cdot L_{k\ell} \,p_\ell(x) = \mathcal{K}_{k\ell}\,g(x) - g(x).
\end{equation} 
Using this definition, condition \cref{InvMeasInt2} for the invariance of a measure $\mu$ can be approximated by
\begin{equation}\label{InvMeasData}
	\int_X \mathcal{L}_{k\ell} g \, \mathrm{d}\mu = 0 \qquad \forall g \in \mathrm{span}\{p_k(x)\}.
\end{equation} 
Note that this approximation is built purely from the data snapshots and requires no knowledge of the map $f$ driving the dynamics.

Condition \cref{InvMeasData} is equivalent to requiring that the vector $y \in \mathbb{R}^{\ell_x}$ of moments of $\mu$ with degree up to $k$ satisfies $L_{k\ell} \, y = 0$.  This finite set of constraints is a data-driven approximation of the condition $A_ky = 0$ in \cref{MeasureSDP}. This suggests that an approximation to an invariant measure $\mu$ for \cref{DiscreteDS} may be discovered from the data snapshots by first solving the SDP     
\begin{equation}\label{MeasureSDPData}
	\min_{y \in \mathbb{R}^{\ell_x}} \ F(y) \quad \text{such that}\quad \begin{cases}
		L_{k\ell}\, y = 0 \\
		y_0 = 1 \\
		M(\sigma_jy) \succeq 0, & j = 0,1,\dots,J,
	\end{cases}
\end{equation} 
and then applying the measure recovery strategy outlined in \cref{ss:recovery}.
Once again, observe that the SDP in \cref{MeasureSDPData} is constructed only using data gathered from the system and without requiring any knowledge of the underlying map $f$. In particular, one can construct it and solve it even when the dynamics are not governed by polynomial $f$. 
 
\begin{remark} 
	Contrary to standard practice, we implement EDMD using two different polynomial dictionaries $p_k$ and $p_\ell$. This is useful because the space of degree-$k$ polynomials is not generally invariant under the action of the Koopman operator, so $\mathcal{K}g$ for a polynomial $g$ of degree $k$ can be better approximated using polynomials of degree $\ell \geq k$. If the data snapshots are governed by a polynomial map of known degree $d \geq1$, then it would suffice to set $\ell = d k$ as in \cref{sec:InvMeas}. When the map $f$ is unknown, however, one should implement the method with fixed $k$ and increasingly large values of $\ell$ until good performance is observed. This can be expected given theoretical analysis in \cite{bramburger2023auxiliary}, which under mild conditions guarantees the convergence of the approximate Lie derivative $\mathcal{L}_{k\ell}$ to the true Lie derivative $\mathcal{L}$ acting on $\mathrm{span}\{p_k(x)\}$ as both the number $m$ of data snapshots and the degree $\ell$ tend to infinity. More precise results for the case of unknown polynomial maps are presented in \cref{sec:Convergence}.
\end{remark}

\begin{remark}
	Above, as well as in \cref{sec:InvMeas}, one can replace the monomial vectors $p_k(x)$ and $p_\ell(x)$ with any bases for the spaces of polynomials in $x$ of degree $k$ and $\ell$, respectively. Moments are then expressed with respect to these bases. Choosing a good polynomial basis is key to ensure the SDPs \cref{MeasureSDP,MeasureSDPData} are well conditioned. For dynamical systems in one dimension, for instance, it is usually convenient to use the Chebyshev basis. Unfortunately, we are not aware of a basis selection strategy that works well in general.
\end{remark}

\section{Convergence for Polynomial Maps}\label{sec:Convergence}

We now investigate the convergence of the data-driven approach introduced in the previous section to the model-based approach of \cite{Korda2021invariant} when the data points $\{(x_i,z_i)\}_{i=1}^m$ are generated by a polynomial map $f$ of degree $d$. We are interested in particular in clarifying how solutions of the data-driven SDP \cref{MeasureSDPData} behave as the number $m$ of datapoints becomes infinite.

Loosely speaking, one expects that optimal solutions of our data-driven SDP should converge to optimal solutions to the model-based SDP \cref{MeasureSDP}. However, these two problems have a different number of variables unless the polynomial degrees $k$ and $\ell$ satisfy $\ell = dk$, so they cannot be compared directly. Instead, under suitable conditions, we will show that for every $\ell \geq dk$ the optimal solutions of the data-driven SDP \cref{MeasureSDPData} converge (up to subsequences) to an optimal solution of the "limit" problem
\begin{equation}\label{MeasureSDPExtended}
	\min_{y \in \mathbb{R}^{\ell_x}} \ F(y) \quad \text{such that}\quad \begin{cases}
		A_{k\ell}\, y = 0 \\
		y_0 = 1 \\
		M(\sigma_jy) \succeq 0, & j = 0,1,\dots,J.
	\end{cases}
\end{equation}
Here $A_{k\ell}$ is the unique $k_x \times \ell_x$ matrix whose first $\binom{n + dk}{dk}$ columns are the columns of the matrix $A_k$ in \cref{MeasureSDP}, and whose remaining columns are zero. This new SDP has the same number of variables as the data-driven SDP \cref{MeasureSDPData} and, for every $\ell \geq dk$, it enjoys the same convergence properties as \cref{MeasureSDP} as $k\to \infty$. Precisely, the following extension of \cref{th:convergence} holds. 

\begin{theorem}\label{th:convergence-extended}
	Suppose \cref{ass:poly-f,ass:semiagebraic-X} hold and assume $\ell \geq dk$. Let $y^k \in \mathbb{R}^{\ell_x}$ be the optimal solution of \cref{MeasureSDPExtended} for fixed $k \in \mathbb{N}$, and let $\mu \in \Pr(X)$ be optimal for \cref{MeasureLP}. There exists an increasing sequence $\{k_j\}_{j \geq 1} \subset \mathbb{N}$ such that $y^{k_j}_\alpha \to \int_X x^\alpha \, \mathrm{d}\mu$ as $j\to \infty$ for all $\alpha\in \mathbb{N}_0^n$.
\end{theorem}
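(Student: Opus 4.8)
The plan is to follow the proof of \cref{th:convergence} (that is, \cite[Theorem~2]{Korda2021invariant}) essentially verbatim, after checking that the three structural ingredients driving that argument survive the passage from \cref{MeasureSDP} to \cref{MeasureSDPExtended}. These ingredients are: (a) the positive semidefinite moment matrices $M(\sigma_j y)\succeq 0$, which in \cref{MeasureSDPExtended} now involve all moments up to degree $\ell\geq dk$, provide uniform a priori bounds on the entries of any feasible $y$ via the Archimedean condition in \cref{ass:semiagebraic-X}; (b) the linear constraint $A_{k\ell}y=0$ encodes exactly the invariance conditions \cref{InvMeasMoments} for test monomials of degree $\leq k$, because $A_{k\ell}=[A_k\mid 0]$ means the constraint reads $A_k\hat{y}=0$ on the degree-$\leq dk$ truncation $\hat{y}$ of $y$ and leaves the higher moments free; and (c) \cref{MeasureSDPExtended} is a \emph{relaxation} of \cref{MeasureLP}, since the degree-$\ell$ moments of any invariant measure (in particular of the optimizer $\mu$) form a feasible point with the same cost, so the optimal value of \cref{MeasureSDPExtended} is at most the optimal value $F^\star$ of \cref{MeasureLP}.

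With these in hand I would argue as follows. First, using (a) and the standard moment bound $|y^k_\alpha|\leq R^{|\alpha|}$ (valid once the moment matrices reach degree $|\alpha|$, which happens for all large $k$ since $\ell\geq dk\to\infty$), each coordinate sequence $(y^k_\alpha)_k$ is bounded; a diagonal extraction then yields an increasing subsequence $\{k_j\}$ and a limit $y^\star$ with $y^{k_j}_\alpha\to y^\star_\alpha$ for every $\alpha\in\mathbb{N}_0^n$. Second, passing the constraints $M(\sigma_j y^{k_j})\succeq 0$ to the limit entrywise shows the limiting infinite moment and localizing matrices are positive semidefinite, so by Putinar's theorem, justified by \cref{ass:semiagebraic-X}, $y^\star$ is the moment sequence of a measure $\mu^\star$ supported on $X$, with $y^\star_0=1$ making it a probability measure. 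Third, for each fixed $\beta$ the row of $A_{k_j\ell}y^{k_j}=0$ indexed by $\beta$ is present once $k_j\geq|\beta|$ and passes to the limit; since $y^\star$ are genuine moments, this reads $\int_X[(f(x))^\beta-x^\beta]\,\mathrm{d}\mu^\star=0$, so $\mu^\star$ is invariant by \cref{InvMeasMoments}. Finally, optimality follows from lower semicontinuity: $F$ depends only on the finitely many degree-$\leq d$ moments, which converge, so $F(y^\star)\leq\liminf_j F(y^{k_j})\leq F^\star$ by (c), while feasibility of the invariant measure $\mu^\star$ for \cref{MeasureLP} gives $F(y^\star)\geq F^\star$. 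Hence $\mu^\star$ is optimal and $y^\star_\alpha=\int_X x^\alpha\,\mathrm{d}\mu^\star$; identifying $\mu^\star$ with $\mu$ (they coincide when the optimizer is unique) gives the claim.

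The part requiring the most care — and the only genuine departure from the proof of \cref{th:convergence} — is the bookkeeping forced by the two distinct degrees $k$ and $\ell$. The moments of degree between $dk+1$ and $\ell$ are constrained only through the moment matrices in (a) and not at all through $A_{k\ell}$, so I must confirm both that these extra variables do not obstruct the limit from being a bona fide moment sequence (they do not, since Putinar only needs the full localizing hierarchy, which is present) and that invariance is still recovered in full (it is, because the degree-$\leq k$ constraints exhaust all test monomials as $k\to\infty$). I would also verify that the Archimedean moment bound in (a) is uniform as $k$, and hence $\ell=\ell(k)\geq dk$, grow together.

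One remark on an alternative route: one could try to invoke \cref{th:convergence} as a black box by truncating $y^k$ to its degree-$\leq dk$ part $\hat{y}^k$. A principal-submatrix argument shows $\hat{y}^k$ is feasible for \cref{MeasureSDP} with $F(\hat{y}^k)=F(y^k)$, and combining the relaxation bounds shows $\hat{y}^k$ is asymptotically optimal for \cref{MeasureSDP}. However, $\hat{y}^k$ need not be an exact minimizer of \cref{MeasureSDP} (a truncated pseudo-moment vector need not extend to degree $\ell$), so \cref{th:convergence} does not apply directly and the compactness argument above is needed in any case.
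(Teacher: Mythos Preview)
Your proposal is correct and follows essentially the same approach as the paper, which explicitly states that the proof is identical to that of \cite[Theorem~2]{Korda2021invariant} and gives only the brief sketch: uniform boundedness of $\{y^k_\alpha\}$ via \cref{ass:semiagebraic-X}, diagonal extraction of a convergent subsequence, Putinar's theorem to obtain a representing probability measure $\mu^\star\in\Pr(X)$, and invariance from the equality constraints. Your write-up is in fact more detailed than the paper's own treatment, particularly in the optimality step and the bookkeeping for the two degrees $k$ and $\ell$, and your closing remark on why the truncation-to-$\hat y^k$ shortcut fails is a useful addition.
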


The proof of this statement is the same as that of \cite[Theorem~2]{Korda2021invariant}, so we omit the details. Briefly, \cref{ass:semiagebraic-X} ensures that the sequence $\{y_\alpha^k\}_{k \geq 1}$ is uniformly bounded in $k$ for every $\alpha \in \mathbb{N}_0^n$, so by a diagonalization argument one can extract a subsequence $\{k_j\}_{j \geq 1}$ and real numbers $y_\alpha^*$ such that $y_\alpha^{k_j} \to y_\alpha^*$ for every $\alpha \in \mathbb{N}_0^n$. Moreover, always by \cref{ass:semiagebraic-X}, there exists a probability measure $\mu \in \Pr(X)$ such that $y_\alpha^* = \int_X x^\alpha \, \mathrm{d}\mu$ for every $\alpha$. The equality constraints in \cref{MeasureSDPExtended} ensure that $\mu$ is invariant for the map $f$.

The rest of this section is dedicated to proving that the data-driven SDP \cref{MeasureSDPData} reduces almost surely to problem \cref{MeasureSDPExtended} as the number $m$ of data points increases. (The meaning of the quantifier "almost surely" is made precise in the next subsection.) We start in \cref{ss:analysis-assumptions} by stating key assumptions and background results that enable our convergence analysis. In \cref{ss:analysis-feasibility} we establish that the data-driven SDP \cref{MeasureSDPData} is feasible almost surely if the number $m$ of data points is large enough. We show that its optimal solutions converge almost surely to those of \cref{MeasureSDPExtended} as $m\to\infty$ in \cref{ss:analysis-convergence}, and conclude in \cref{ss:conv-failure} with further comments on our analysis.

\subsection{Preliminaries}
\label{ss:analysis-assumptions}

In order to study how solutions of the data-driven SDP \cref{MeasureSDPData} behave with increasing amount of data, we need to introduce some assumptions on the data collection process. The first one imposes some technical restrictions on the polynomial map $f:X \to X$ from which the data is collected.

\begin{assumption}\label{ass:f-assumptions}
	The data points $\{(x_i,z_i)\}_{i=1}^m$ satisfy $z_i=f(x_i)$ where $f:X\to X$ is a polynomial of degree $d$ such that:
	\begin{enumerate}[topsep=1ex,leftmargin=2\parindent, labelsep=*]
		\item For every integer $p$ and every choice of multi-indices $\alpha_1,\ldots,\alpha_p \in \mathbb{N}_0^n$, the polynomials $[f(x)]^{\alpha_1} - x^{\alpha_1},\,\ldots,\, [f(x)]^{\alpha_p} - x^{\alpha_p}$ and $1$ are linearly independent.
		\item There exists an invariant probability measure $\mu \in \Pr(X)$ for $f$ whose support is not an algebraic set, meaning that no nonzero polynomial vanishes on it.
	\end{enumerate}
\end{assumption}

Next, we assume that the data points $x_i$ are generated by a random sampling process. The sequence of datasets $\{(x_i,z_i)\}_{i=1}^m$ is then also a random variable, and we say that a statement holds \emph{almost surely} as $m\to \infty$ if it holds for almost every sequence of increasingly large  datasets. 
\begin{assumption}\label{ass:sampling}
	Each data point $x_i$ is an independent realization of a random variable whose distribution is a probability measure $\nu \in \Pr(X)$. 
\end{assumption}
\noindent
The assumption of independence can in fact be relaxed: the points $x_i$ only need to be sampled from $\nu \in \Pr(X)$ in such a way that, as $m\to\infty$, the empirical averages of polynomial functions over the dataset converge almost surely to the averages calculated with respect to the sampling measure $\nu$ \cite[Assumption~4.2]{bramburger2023auxiliary}. This is true, for instance, when the datapoints $x_i$ are collected from an ergodic trajectory, as we do for the computational examples of \Cref{s:examples}.

Irrespective of how exactly the data is sampled, we assume that no nonzero polynomial vanishes on the support of the sampling measure $\nu$.
\begin{assumption}\label{ass:support}
	If $g$ is a polynomial such that $\int_X g(x) \,\mathrm{d}\nu=0$, then $g(x)\equiv 0$.
\end{assumption}
\noindent
This assumption ensures that two polynomials are equal if they take the same values on the support of $\nu$. In particular, if a sequence of polynomials $\{g_j\}$ converges to a polynomial $g$ as $j \to \infty$ in the Lebesgue space $L^2(\nu)$ of functions that are square integrable with respect to $\nu$, then they converge pointwise on $\mathbb{R}^n$ and uniformly on every compact set. 

Finally, we recall the following version of a more general convergence result for approximate Lie derivatives from \cite{bramburger2023auxiliary}, where $\mathcal{P}_\ell(u)$ denotes the $L^2(\nu)$-orthogonal projection of a function $u$ onto the space of degree-$\ell$ polynomials. This is a key ingredient in our convergence analysis.

\begin{theorem}[{\cite[Theorem 4.1]{bramburger2023auxiliary}}]\label{th:conv-L2}
	For every degree-$k$ polynomial $g$ and every $\ell \geq k$, the approximate Lie derivative $\mathcal{L}_{k\ell} g$ in \cref{e:approx-lie} converges to $\mathcal{P}_\ell (\mathcal{K} g - g)$ almost surely as $m \to \infty$.
\end{theorem}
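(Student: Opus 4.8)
The plan is to compute the $m\to\infty$ limit of the EDMD matrix $K_{k\ell}=QP^\dagger$ in closed form, recognise this limit as the coefficient representation of the $L^2(\nu)$ projection $\mathcal{P}_\ell(\mathcal{K}g)$, and then subtract $g$. Since $g$ has degree $k\le\ell$ it is fixed by $\mathcal{P}_\ell$, so the subtraction will turn $\mathcal{P}_\ell(\mathcal{K}g)-g$ into $\mathcal{P}_\ell(\mathcal{K}g-g)$ and deliver the claim.

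First I would pass to normalised data matrices and apply a law of large numbers. Every entry of $\tfrac1m PP^\intercal$ and of $\tfrac1m QP^\intercal$ is the empirical average over the samples $x_i$ of a fixed polynomial, which is bounded on the compact set $X$; hence, by \cref{ass:sampling} and the strong law of large numbers (the only property actually used is the almost-sure convergence of empirical polynomial averages, which also holds under the weaker sampling hypotheses mentioned after \cref{ass:sampling}), one has almost surely
\begin{equation*}
	\tfrac1m PP^\intercal \to G_\infty:=\int_X p_\ell(x)p_\ell(x)^\intercal\,\mathrm{d}\nu,
	\qquad
	\tfrac1m QP^\intercal \to A_\infty:=\int_X p_k(f(x))p_\ell(x)^\intercal\,\mathrm{d}\nu.
\end{equation*}
The crucial structural fact is that $G_\infty$ is positive definite: for any nonzero $v$, $v^\intercal G_\infty v=\int_X (v^\intercal p_\ell)^2\,\mathrm{d}\nu$ vanishes only if $v^\intercal p_\ell\equiv 0$ on the support of $\nu$, which \cref{ass:support} forbids. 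Consequently the smallest eigenvalue of $\tfrac1m PP^\intercal$ converges to that of $G_\infty$, which is strictly positive, so $PP^\intercal$ is invertible for all large $m$ almost surely and $P^\dagger=P^\intercal(PP^\intercal)^{-1}$. Writing $K_{k\ell}=\big(\tfrac1m QP^\intercal\big)\big(\tfrac1m PP^\intercal\big)^{-1}$ and using continuity of matrix inversion at the invertible matrix $G_\infty$ gives $K_{k\ell}\to A_\infty G_\infty^{-1}$ almost surely.

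It then remains to identify the limit. For $g=c^\intercal p_k$ we have $\mathcal{K}_{k\ell}g=c^\intercal K_{k\ell}\,p_\ell\to c^\intercal A_\infty G_\infty^{-1}p_\ell$. Using $(\mathcal{K}g)(x)=g(f(x))=c^\intercal p_k(f(x))$ gives $(c^\intercal A_\infty)^\intercal=\int_X (\mathcal{K}g)(x)\,p_\ell(x)\,\mathrm{d}\nu$, which is exactly the right-hand side of the normal equations $G_\infty b=\int_X(\mathcal{K}g)\,p_\ell\,\mathrm{d}\nu$ determining the coefficient vector $b$ of $\mathcal{P}_\ell(\mathcal{K}g)$; hence $b^\intercal=c^\intercal A_\infty G_\infty^{-1}$ and $\mathcal{K}_{k\ell}g\to\mathcal{P}_\ell(\mathcal{K}g)$. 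Because $\deg g\le k\le\ell$ implies $\mathcal{P}_\ell g=g$, linearity of the projection yields
\begin{equation*}
	\mathcal{L}_{k\ell}g=\mathcal{K}_{k\ell}g-g\longrightarrow\mathcal{P}_\ell(\mathcal{K}g)-\mathcal{P}_\ell g=\mathcal{P}_\ell(\mathcal{K}g-g)
\end{equation*}
almost surely. Convergence of coefficient vectors is equivalent to $L^2(\nu)$ convergence and, by the nondegeneracy in \cref{ass:support}, also gives uniform convergence on compact sets.

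I expect the main obstacle to be the careful handling of the pseudoinverse for finite $m$. A priori $P\in\mathbb{R}^{\ell_x\times m}$ need not have full row rank, so the identity $P^\dagger=P^\intercal(PP^\intercal)^{-1}$ is unavailable and $K_{k\ell}$ cannot be written as a product of two separately convergent factors. The positive definiteness of $G_\infty$ extracted from \cref{ass:support}, together with the almost-sure convergence of $\tfrac1m PP^\intercal$, is precisely what overcomes this by forcing full row rank for all sufficiently large $m$; the subtlety is that the threshold beyond which $P$ has full rank is itself a random quantity, so the rank statement holds only on a probability-one event and only eventually in $m$.
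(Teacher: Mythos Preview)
The paper does not supply its own proof of this statement: \cref{th:conv-L2} is quoted verbatim from \cite[Theorem~4.1]{bramburger2023auxiliary} and used as a black box, so there is no in-paper argument to compare against. That said, your proof is correct and is essentially the standard EDMD convergence argument one finds in the literature (and presumably in the cited reference): pass to the empirical Gram matrices $\tfrac1m PP^\intercal$ and $\tfrac1m QP^\intercal$, invoke the strong law of large numbers entrywise, use \cref{ass:support} to guarantee invertibility of the limiting Gram matrix $G_\infty$ so that $P^\dagger=P^\intercal(PP^\intercal)^{-1}$ eventually, and then read off the limit $A_\infty G_\infty^{-1}$ as the matrix of normal-equation coefficients for the $L^2(\nu)$ projection. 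Your handling of the pseudoinverse subtlety---that full row rank of $P$ holds only eventually and only almost surely---is accurate and is exactly the point where \cref{ass:support} does its work.
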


\noindent
We stress that this theorem only guarantees convergence as the number of datapoints $m$ grows without bound, but provides no convergence rates. Analysis \cite{zhang2023quantitative} and computational evidence \cite{bramburger2023auxiliary} reveal that if the data is generated by random sampling, then approximation errors decay like $1/\sqrt{m}$. However, this rate could be even slower if data is collected from a single trajectory because time averages can be made to converge to their ergodic value arbitrarily slowly~\cite{krengel1978speed}.

\subsection{Almost-sure Strict Feasibility}
\label{ss:analysis-feasibility}
Having stated all of our assumptions, we now show that if the number $m$ of data points and the polynomial degree $\ell$ are large enough, then the data-driven SDP \cref{MeasureSDPData} is almost surely \emph{strictly feasible}. This means that there exists a vector $y$ satisfying the constraints of \cref{MeasureSDPData} for which the moment matrices $M(\sigma_j y)$ are positive definite.

\begin{proposition}\label{prop:feasibility}
	Suppose \cref{ass:f-assumptions,,ass:sampling,,ass:support} hold. For every $k$ and every $\ell \geq dk$, there almost surely exists $m_0 \in \mathbb{N}$ such that the SDP \cref{MeasureSDPData} is strictly feasible for all $m \geq m_0$.
\end{proposition}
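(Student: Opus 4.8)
The plan is to exhibit, for all large $m$, an explicit strictly feasible point of \cref{MeasureSDPData} obtained by perturbing the truncated moment vector of the invariant measure supplied by \cref{ass:f-assumptions}. Write $\mu\in\Pr(X)$ for that measure, whose support is not algebraic, and let $y^\star\in\mathbb{R}^{\ell_x}$ collect its moments up to degree $\ell$. I would first show that $y^\star$ is strictly feasible for the limit problem \cref{MeasureSDPExtended}. Indeed $y^\star_0=\mu(X)=1$; invariance of $\mu$ gives $\int_X(f(x)^\alpha-x^\alpha)\,\mathrm{d}\mu=0$ for every $\alpha$, which is exactly $A_{k\ell}y^\star=0$; and for each $j$ and each $v\neq 0$ one has $v^\intercal M(\sigma_j y^\star)v=\int_X\sigma_j(x)\,(v^\intercal p_{\gamma_j}(x))^2\,\mathrm{d}\mu\ge 0$, with strict inequality because the nonzero polynomial $\sigma_j\,(v^\intercal p_{\gamma_j})^2$ cannot vanish on the whole non-algebraic support of $\mu$. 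Hence $M(\sigma_j y^\star)\succ 0$ for all $j$.

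Next I would transfer strict feasibility from the limit problem to the data-driven problem using the convergence $L_{k\ell}\to A_{k\ell}$. Since $\ell\ge dk$, every $f(x)^\alpha-x^\alpha$ with $|\alpha|\le k$ has degree $\le\ell$, so $\mathcal{P}_\ell(\mathcal{K}g-g)=\mathcal{K}g-g$ for $g\in\mathrm{span}\{p_k\}$; \cref{th:conv-L2} then gives that, almost surely, each entry of $L_{k\ell}$ converges to the corresponding entry of $A_{k\ell}$ as $m\to\infty$. The normalization row is exact once $m$ is large enough that $P$ has full row rank, which holds almost surely because $\tfrac1m PP^\intercal$ converges to the Gram matrix $\int_X p_\ell p_\ell^\intercal\,\mathrm{d}\nu$, positive definite by \cref{ass:support}: then $\mathcal{K}_{k\ell}1=1$, the first row of $L_{k\ell}$ vanishes, and $\operatorname{rank} L_{k\ell}\le k_x-1$. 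On the other hand \cref{ass:f-assumptions} makes $\{f^\alpha-x^\alpha:1\le|\alpha|\le k\}$ linearly independent, so $\operatorname{rank} A_{k\ell}=k_x-1$, and lower semicontinuity of the rank gives $\operatorname{rank} L_{k\ell}\ge k_x-1$ for large $m$. Thus $\operatorname{rank} L_{k\ell}=\operatorname{rank} A_{k\ell}=k_x-1$ eventually, and the orthogonal projectors onto $\ker L_{k\ell}$ converge to the projector onto $\ker A_{k\ell}$.

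With the kernels converging, I set $y^{(m)}$ to be the orthogonal projection of $y^\star$ onto $\ker L_{k\ell}$. Then $y^{(m)}\in\ker L_{k\ell}$ and $y^{(m)}\to y^\star$, since $y^\star\in\ker A_{k\ell}$ is fixed by the limiting projector. Because $y^{(m)}_0\to y^\star_0=1\neq 0$, the rescaled vector $\hat y^{(m)}:=y^{(m)}/y^{(m)}_0$ is well defined for large $m$, still lies in the linear subspace $\ker L_{k\ell}$, satisfies $\hat y^{(m)}_0=1$, and converges to $y^\star$. Finally, $y\mapsto M(\sigma_j y)$ is linear, so $M(\sigma_j\hat y^{(m)})\to M(\sigma_j y^\star)\succ 0$; as positive definiteness is an open condition, $M(\sigma_j\hat y^{(m)})\succ 0$ for all $j$ once $m$ is large. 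Hence $\hat y^{(m)}$ is strictly feasible for \cref{MeasureSDPData}, which proves the claim.

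I expect the main obstacle to be the rank-matching step: one must rule out that sampling fluctuations raise $\operatorname{rank} L_{k\ell}$ above $\operatorname{rank} A_{k\ell}$, for otherwise $\ker L_{k\ell}$ could shrink and fail to contain a point near $y^\star$. The exact vanishing of the normalization row (hence the a priori bound $\operatorname{rank} L_{k\ell}\le k_x-1$), combined with lower semicontinuity of the rank, is what forces equality and the convergence of kernels. The independence hypothesis in \cref{ass:f-assumptions} and the full-support hypothesis \cref{ass:support} enter precisely to pin down $\operatorname{rank} A_{k\ell}$ and to guarantee $M(\sigma_j y^\star)\succ 0$, which is the ingredient that upgrades mere feasibility to \emph{strict} feasibility.
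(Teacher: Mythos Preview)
Your argument is correct and follows the same high-level strategy as the paper: take the truncated moment vector $y^\star$ of the invariant measure from \cref{ass:f-assumptions}, check it is strictly feasible for the limit SDP, and then perturb it into a feasible point of the data-driven SDP that stays close enough to inherit strict positivity of the moment matrices. The technical implementation, however, differs. The paper stacks the normalization row onto $L_{k\ell}$ to form an augmented matrix $B_m=[L_{k\ell};\,e^\top]$ and defines $y^m=(I-B_m^\dagger B_m)\bar y+B_m^\dagger c$, then shows $y^m\to\bar y$ via the bound $\|y^m-\bar y\|\le\|B_m^\dagger\|\,\|B_m-B\|\,\|\bar y\|$ together with continuity of the pseudoinverse on constant-rank sequences. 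You instead project $y^\star$ onto $\ker L_{k\ell}$ and rescale to enforce $y_0=1$, which requires the additional (and correct) observation that the row of $L_{k\ell}$ corresponding to the constant test function vanishes \emph{exactly} once $P$ has full row rank, so that $\operatorname{rank}L_{k\ell}\le k_x-1$ matches $\operatorname{rank}A_{k\ell}$ and the kernel projectors converge. What your route buys is a cleaner separation of the linear constraint $L_{k\ell}y=0$ from the affine normalization, and it makes explicit the rank-capping mechanism that is only implicit in the paper's argument; what the paper's route buys is that the normalization is handled automatically by the augmented system, avoiding the separate rescaling step.
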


\begin{proof}
	Let $A_{k\ell}$ be the $k_x \times \ell_x$ matrix appearing in problem \cref{MeasureSDPExtended}. Consider the vector $\bar{y} \in \mathbb{R}^{\ell_x}$ of degree-$\ell$ moments of the invariant measure $\mu$ in the second part of \cref{ass:f-assumptions}. This assumption ensures that $A_{k\ell} \bar{y} =0$, $\bar{y}_0 = 1$, and $M(\sigma_j \bar{y})\succ 0$, so $\bar{y}$ is a strictly feasible point of~\cref{MeasureSDPExtended}. 
	
	Next, let $e \in \mathbb{R}^{\ell_x}$ be the vector such that $1=e \cdot p_\ell(x)$ and set
	\begin{equation}
		B = \begin{bmatrix} A_{k\ell} \\ e^\top \end{bmatrix} \in \mathbb{R}^{(k_x +1)\times \ell_x}\qquad
		B_m = \begin{bmatrix} L_{k\ell} \\ e^\top \end{bmatrix} \in \mathbb{R}^{(k_x +1)\times \ell_x}\qquad
		c = \begin{bmatrix} 0 \\ 1 \end{bmatrix} \in \mathbb{R}^{k_x +1},
	\end{equation}
	so the equality constraints of the SDPs \cref{MeasureSDPExtended,MeasureSDPData} can be written as $B_m y = c$ and $By=c$, respectively. Note that $B_m$ depends on the number of data points because so does $L_{k\ell}$, while $B$ does not. For each $m$, the vector
	\begin{equation}
		y^m := ( I - B_m^\dagger B_m) \bar{y} + B_m^\dagger c
	\end{equation}
	satisfies $B_m y^m = c$. We need to prove that, almost surely, there exists an integer $m_0>0$ such that $M(\sigma_j y^m)\succ 0$ when $m \geq m_0$, meaning that $y^m$ is a strictly feasible point for~\cref{MeasureSDPData}.

	Since the matrices $M(\sigma_j \bar{y})$ are positive definite and their eigenvalues are continuous with respect to perturbations in $\bar{y}$, it is enough to show that $y^m$ converges to $\bar{y}$ almost surely as $m \to \infty$. To see this, we use the definition of $y^m$ and the identity $c=B\bar{y}$ to estimate
	\begin{equation}\label{e:proof-estimate-1}
		\begin{aligned}
			\|y^m - \bar{y}\| 
			&= \| B_m^\dagger B_m \bar{y} - B_m^\dagger c\| \\ 
			&= \| B_m^\dagger \left( B_m \bar{y} - B \bar{y} \right)\| \\
			&\leq \| B_m^\dagger \| \|B_m - B \| \| \bar{y} \|.
		\end{aligned}
	\end{equation}
	We now claim that $B_m$ converges almost surely to $B$ and that the norm $\| B_m^\dagger \|$ is almost surely uniformly bounded in $m$, so $\|y^m - \bar{y}\|$ tends to zero almost surely as desired.

	For the first claim, fix an arbitrary degree-$k$ polynomial $g(x)=w \cdot p_k(x)$, where $w \in \mathbb{R}^{k_x}$. By \cref{th:conv-L2}, the approximate Lie derivative $\mathcal{L}_{k\ell} g = w \cdot L_{k\ell} p_\ell(x)$ converges almost surely to $\mathcal{P}_\ell (\mathcal{K} g - g) = \mathcal{P}_\ell (g \circ f - g)$ as $m$ is raised. Since $g \circ f - g= w \cdot A_{k\ell} p_\ell$ is a polynomial of degree at most $dk \leq \ell$, we must have $\mathcal{P}_\ell (g \circ f - g) = w \cdot A_{k\ell} p_\ell$ on the support of the data sampling measure $\nu$. Then, \cref{ass:support} guarantees that $w \cdot L_{k\ell} p_\ell(x)$ converges to $w \cdot A_{k\ell} p_\ell(x)$ pointwise in $x$. This is true for all vectors $w \in \mathbb{R}^{k_x}$, so $L_{k\ell}$ must converge almost surely to $A_{k\ell}$ as $m\to\infty$. This, in turn, proves our claim that $B_m \to B$ almost surely as $m\to\infty$.
	
	To see that $\|B_m^\dagger\|$ is almost surely uniformly bounded in $m$, instead, note that the first condition in \cref{ass:f-assumptions} implies that the matrix $B$ has full row rank. This, combined with the almost sure convergence $B_m \to B$ and the sequential lower semicontinuity of the rank function, ensures that $B_m$ has a full row rank almost surely for large enough $m$. Since pseudoinversion is continuous on constant rank sequences \cite{Stewart1969}, we obtain that $B_m^\dagger \to B^\dagger$ almost surely as $m\to \infty$, so $\|B_m^\dagger\|$ is almost surely uniformly bounded.
\end{proof}

\subsection{Convergence with Infinite Data}
\label{ss:analysis-convergence}

We now establish our main convergence result: for every sufficiently large $\ell$, the sequence of optimal solutions of the SDP \cref{MeasureSDPData} obtained for increasing $m$ has almost surely a subsequence that converges to an optimal solution of the model-based SDP \cref{MeasureSDPExtended}. This result can be bootstrapped to the convergence results from \cref{th:convergence-extended} to conclude that if one takes $\ell \geq dk$ and lets the parameters $m$ and $k$ tend to infinity, then our data-driven approach computes moments of invariant measures for the polynomial map $f$ generating the data.
Observe that the  parameters $m$ and $k$ must be increased in this precise order because one must obtain~\cref{MeasureSDPExtended} as the infinite-data limit of~\cref{MeasureSDPData} before one can let $k\to \infty$ and apply \cref{th:convergence-extended}.

\begin{theorem}
	Suppose \cref{ass:semiagebraic-X,,ass:f-assumptions,,ass:sampling,,ass:support} hold. Let the cost function $F$ in \cref{MeasureSDPData,MeasureSDPExtended} be continuous. For every $k$ and every $\ell \geq dk$, the sequence $\{y^m\}_{m\geq 1}$ of optimal solutions of the SDP~\cref{MeasureSDPData} almost surely has a subsequence that converges to an optimal solution of the SDP \cref{MeasureSDPExtended} as $m\to \infty$.
\end{theorem}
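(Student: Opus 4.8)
The plan is to prove the theorem by combining a compactness argument with a recovery-sequence argument, working throughout on the almost-sure event on which the conclusions of \cref{prop:feasibility} and the convergences established in its proof---namely $L_{k\ell}\to A_{k\ell}$, $B_m\to B$, and the uniform boundedness of $\|B_m^\dagger\|$ together with $B_m^\dagger\to B^\dagger$---all hold. I would proceed in three steps: extract a convergent subsequence of optimizers, show its limit is feasible for \cref{MeasureSDPExtended}, and show it is optimal.

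First I would establish that the optimizers $\{y^m\}$ are bounded uniformly in $m$. The equality constraints $L_{k\ell}y=0$ depend on $m$, but the normalization $y_0=1$ and the moment-matrix constraints $M(\sigma_j y)\succeq 0$ do not; by the Archimedean property in the second part of \cref{ass:semiagebraic-X}, these $m$-independent constraints alone confine every feasible $y$ to a compact set, a standard fact in the theory of truncated moment problems \cite{laurent2009sums,deKlerk2018}. Since \cref{prop:feasibility} guarantees that a minimizer exists for all large $m$, Bolzano--Weierstrass yields a subsequence $y^{m_j}\to y^*$. Passing to the limit in the constraints of \cref{MeasureSDPData} then shows $y^*$ is feasible for \cref{MeasureSDPExtended}: the identity $L_{k\ell}y^{m_j}=0$ together with $L_{k\ell}\to A_{k\ell}$ gives $A_{k\ell}y^*=0$; the normalization passes to the limit; and closedness of the positive semidefinite cone together with continuity of $y\mapsto M(\sigma_j y)$ gives $M(\sigma_j y^*)\succeq 0$. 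In particular, writing $p^*$ for the optimal value of \cref{MeasureSDPExtended}, feasibility yields $F(y^*)\geq p^*$.

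The heart of the proof is the reverse inequality $F(y^*)\le p^*$, and here I expect the main obstacle. Let $\hat y$ be a minimizer of \cref{MeasureSDPExtended}, which exists because its feasible set is compact by the same Archimedean argument and nonempty, containing the strictly feasible point $\bar y$ from \cref{prop:feasibility}. The naive idea---project $\hat y$ onto the $m$-dependent affine set $\{y:B_m y=c\}$ and use it as a competitor---fails because $\hat y$ may lie on the boundary of the positive semidefinite cone, so the projection need not satisfy $M(\sigma_j\,\cdot\,)\succeq 0$. I would circumvent this by exploiting strict feasibility as a Slater point: for $\epsilon\in(0,1)$ set $\hat y_\epsilon=(1-\epsilon)\hat y+\epsilon\bar y$, which is strictly feasible for \cref{MeasureSDPExtended} since $M(\sigma_j\hat y_\epsilon)\succeq\epsilon\,M(\sigma_j\bar y)\succ 0$. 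Applying the projection argument from the proof of \cref{prop:feasibility} with target $\hat y_\epsilon$ produces $\hat y_\epsilon^m=(I-B_m^\dagger B_m)\hat y_\epsilon+B_m^\dagger c$ satisfying $B_m\hat y_\epsilon^m=c$ and $\hat y_\epsilon^m\to\hat y_\epsilon$ almost surely; since $\hat y_\epsilon$ has positive definite moment matrices and eigenvalues vary continuously, $\hat y_\epsilon^m$ is feasible for \cref{MeasureSDPData} for all $m$ large enough.

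Finally I would chain the estimates in the correct order. For each fixed $\epsilon$ and all large $j$, optimality of $y^{m_j}$ gives $F(y^{m_j})\le F(\hat y_\epsilon^{m_j})$; letting $j\to\infty$ and using continuity of $F$ with $y^{m_j}\to y^*$ and $\hat y_\epsilon^{m_j}\to\hat y_\epsilon$ yields $F(y^*)\le F(\hat y_\epsilon)$; then letting $\epsilon\to 0$ and using continuity once more gives $F(y^*)\le F(\hat y)=p^*$. Together with $F(y^*)\ge p^*$, this shows $y^*$ is an optimal solution of \cref{MeasureSDPExtended}, proving the claim. The only delicate points are the uniform-in-$m$ compactness of the feasible sets, which decouples from the data because the moment constraints are $m$-independent, and the order of the two limits $j\to\infty$ then $\epsilon\to 0$, which is forced because feasibility of the competitor $\hat y_\epsilon^m$ holds only for $m$ large depending on $\epsilon$.
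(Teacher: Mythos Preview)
Your proposal is correct and follows essentially the same approach as the paper: compactness of the moment-matrix constraints to extract a convergent subsequence, $L_{k\ell}\to A_{k\ell}$ to show the limit is feasible for \cref{MeasureSDPExtended}, and a recovery sequence built by projecting a convex combination of an optimizer and the strictly feasible point $\bar y$ onto the data-dependent affine set. The only cosmetic difference is that the paper parametrizes the convex combination by choosing $\delta$ so that $F(y^\epsilon)\le F(y^{\text{opt}})+\epsilon$ directly, whereas you take the mixing parameter itself to zero and invoke continuity of $F$; the two are equivalent.
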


\begin{proof}
	Let us first prove that SDP \cref{MeasureSDPExtended} has an optimal solution. The second condition in \cref{ass:f-assumptions} guarantees the existence of a strictly feasible point $y$, while \cref{ass:semiagebraic-X} ensures that the feasible set is compact. Every minimizing sequence must then have a convergent subsequence, whose limit is an optimizer by the continuity of the cost function $F$. The same arguments show that the SDP \cref{MeasureSDPData} has an optimal solution $y^m$ when it is feasible, which is true almost surely for large enough $m$ by \cref{prop:feasibility}. 
	
	Consider now the sequence $\{y^m\}_{m\geq 1} \subset \mathbb{R}^{\ell_x}$ of optimal solutions of \cref{MeasureSDPData}. Since each vector $y^m$ belongs to the set $\{y \in \mathbb{R}^{\ell_x}:\;M(\sigma_j y) \succeq 0 \quad \forall i = 1,\ldots,J\}$, which is compact by \cref{ass:semiagebraic-X}, we can almost surely extract a subsequence of $\{y^m\}_{m\geq 1}$ (not relabelled for simplicity) that converges to a point $y^*$. We claim that $y^*$ is almost surely an optimal solution of \cref{MeasureSDPExtended}.

	To prove this claim, we observe that $y^*$ satisfies the matrix inequalities $M(\sigma_j y) \succeq 0$ for all $i = 1,\ldots,J$ by construction. Moreover, with $B$, $B_m$ and $c$ defined as in the proof of \cref{prop:feasibility}, we have
	\begin{equation}
		\begin{aligned}
			\| B y^* - c \|
			&= \| B y^* - B_m y^m \|\\
			&\leq \| B y^* - B y^m \| + \| (B - B_m) y^m \|\\
			&\leq \| B \| \|y^* - y^m \| + \| B - B_m \| \| y^m \|
		\end{aligned}
	\end{equation}
	because the points $y^m$ satisfy $B_my^m = c$.
	The expression in the last line tends to zero almost surely as $m\to\infty$ because $y^m \to y^*$ and $B_m \to B$ almost surely. Thus, $B y^* = c$ almost surely and we conclude that $y^*$ is almost surely feasible for \cref{MeasureSDPExtended}.

	Next, let $y^\text{opt}$ and $y^{\text{f}}$ be an optimal and a strictly feasible point for~\cref{MeasureSDPExtended}, respectively. Their existence is guaranteed, as explained at the start of the proof. Given arbitrary $\epsilon>0$, let $\delta \in (0,1)$ be small enough that the point
	\begin{equation}
		y^\epsilon := (1-\delta) y^\text{opt} + \delta y^\text{f}
	\end{equation}
	satisfies
	\begin{equation}\label{e:condition-y-eps}
		F(y^\epsilon) \leq F(y^\text{opt}) + \epsilon.
	\end{equation}
	It is not difficult to verify that $y^\epsilon$ is strictly feasible for \cref{MeasureSDPExtended}. Moreover, arguing as in the proof of \cref{prop:feasibility} shows that
	\begin{equation}
		y^{\epsilon,m} := ( I - B_m^\dagger B_m) y^\epsilon + B_m^\dagger c 
	\end{equation}
	converges almost surely to $y^\epsilon$ as $m\to\infty$ and is therefore almost surely feasible for \cref{MeasureSDPData} for all large enough $m$. The continuity of $F$, the optimality of $y^\text{opt}$ and $y^m$, and inequality \cref{e:condition-y-eps} then imply 
	\begin{equation}
		F(y^\text{opt}) \leq F(y^*) =  \lim_{m\to \infty} F(y^m) \leq  \lim_{m\to \infty} F(y^{\epsilon,m}) = F(y^\epsilon) \leq F(y^\text{opt}) + \epsilon.
	\end{equation}
	Since $\epsilon>0$ is arbitrary we conclude that $F(y^\text{opt}) = F(y^*)$, so $y^*$ is optimal for \cref{MeasureSDPExtended}.
\end{proof}

\subsection{Further Discussion}
\label{ss:conv-failure}
We conclude this section by showing that, in general, the convergence of approximate Lie derivative as guaranteed by \cref{th:conv-L2} is by itself not sufficient  to ensure that the data-driven SDP \cref{MeasureSDPData} converges to its exact counterpart \cref{MeasureSDPExtended} as the number $m$ of data points grows.

Consider the map $f(x)=2x-x^2$ on the interval $X=[0,1]$, which we represent using polynomial inequalities as $X=\{x\in\mathbb{R}:\; x-x^2 \geq 0\}$. We seek the invariant measure $\mu$ with the largest possible first moment, $y_1=\int_0^1 x\,\mathrm{d}\mu$. The solution is easily seen to be the atomic measure supported at the fixed point $x=1$, for which $y_1=1$.

We now set up our SDP approach for polynomial degrees $k=1$ and $\ell=2$, which satisfy $\ell\geq dk$ because the polynomial map $f$ we are considering has degree $d=2$. Upon enforcing the constraint $y_0=1$ explicitly, for these parameter choices the model-based SDP \cref{MeasureSDPExtended} reads
\begin{equation}\label{e:example-failure-exact}
	\inf_{y \in \mathbb{R}^2} \ -y_1 
	\quad 
	\text{such that}\quad 
	\begin{cases}
		y_1 - y_2 = 0, \\[1ex]
		\begin{pmatrix}
			1 & y_1 \\ y_1 & y_2
		\end{pmatrix}
		\succeq 0,
		\\[1ex]
		y_1 - y_2 \geq 0.
	\end{cases}
\end{equation}
The first constraint in this problem comes from enforcing the invariance condition $\int_X \mathcal{L} x \, \mathrm{d}\mu = \int_X (x-x^2) \, \mathrm{d}\mu = 0$, which is the only one we must consider given our choice $k=1$. The other two constraints are the moment matrices obtained for $\ell=2$ from the general definition \cref{e:moment-matrices-def} with the polynomials $\sigma_0(x) = 1$ and $\sigma_1(x) = x - x^2$. The optimal solution of \cref{e:example-failure-exact} is easily seen to be $y = (1,1)$ (see the left panel of \cref{f:example-failure}).

\begin{figure}
	\centering
	\includegraphics[width=5cm]{./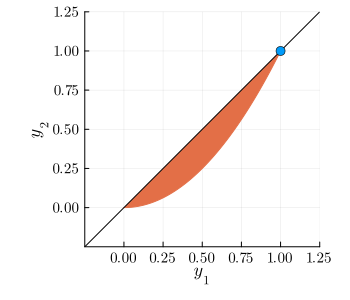}
	\hspace{10pt}
	\includegraphics[width=5cm]{./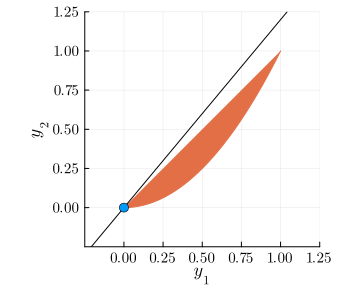}
	\caption{Feasible sets for the SDPs \cref{e:example-failure-exact} (left) and \cref{e:example-failure-data} (right). In both panels, the orange shaded region is the set of points satisfying the (matrix) inequality constraints. Straight black lines indicate the linear spaces $y_1 - y_2=0$ (left panel) and $\alpha_m y_1 - \beta_m y_2 = 0$ for $\alpha_m/\beta_m > 1$ (right panel). Optimal points in both panels are marked by a blue dot.
	}
	\label{f:example-failure}
\end{figure}

The data-driven SDP \cref{MeasureSDPData}, instead, reads
\begin{equation}\label{e:example-failure-data}
	\inf_{y \in \mathbb{R}^2} \ -y_1 
	\quad 
	\text{such that}\quad 
	\begin{cases}
		\alpha_m y_1 - \beta_m y_2 = 0, \\[1ex]
		\begin{pmatrix}
			1 & y_1 \\ y_1 & y_2
		\end{pmatrix}
		\succeq 0,
		\\[1ex]
		y_1 - y_2 \geq 0,
	\end{cases}
\end{equation}
for some coefficients $\alpha_m$ and $\beta_m$ that depend on the size $m$ of the dataset used to approximate the Lie derivative $\mathcal{L}x = x - x^2$ with EDMD. While \cref{th:conv-L2} guarantees that $\alpha_m,\,\beta_m \to 1$ as $m \to \infty$, it gives no additional control on approximation errors, so \emph{a priori} one cannot rule out the possibility that $\alpha_m / \beta_m> 1$ for all $m$. In this case, as illustrated in the right panel of \cref{f:example-failure}, for every $m$ the optimal solution of \cref{e:example-failure-data} would be $y^m=(0,0)$. Thus, $y^m$ would not converge to the optimal solution $y=(1,1)$ of \cref{e:example-failure-exact} despite the convergence of $\alpha_m$ and $\beta_m$.

This potential failure of convergence stems from the lack of strict feasibility for the SDP \cref{e:example-failure-exact}. As shown in the left panel of \cref{f:example-failure}, all feasible points lie on the boundary of the set defined by the (matrix) inequality constraints. Optimal solutions are therefore not robust to arbitrary perturbations to the equality constraints. Such a scenario is not possible in \cref{ss:analysis-convergence}, as the SDP~\cref{MeasureSDPExtended} is strictly feasible by \cref{ass:f-assumptions}(ii). This assumption fails for the present example because the only invariant measures for the map $f(x)=2x-x^2$ on $X=[0,1]$ are the atomic measures supported at the fixed points $x=0$ and $x=1$, which are an algebraic set.

We must however point out that the possible failure of convergence in our present example can be ruled out by an argument different from the proof in \cref{ss:analysis-convergence}. Indeed, since $f$ is a quadratic polynomial map, it can be recovered exactly by sampling it at $m\geq 3$ distinct points $x_1,\ldots,x_m \in [0,1]$. With such a dataset, EDMD recovers the \emph{exact} Lie derivative $\mathcal{L}x$, giving $\alpha_m = \beta_m = 1$ in~\cref{e:example-failure-data}. Since the datapoints $x_1,\ldots,x_m$ are almost surely distinct when sampled according to \cref{ass:sampling,ass:support}, the SDP \cref{e:example-failure-data} coincides with \cref{e:example-failure-exact} almost surely for every $m\geq 3$. The reasoning can in fact be extended to any $n$-dimensional polynomial map of degree $d$ and any choice of the parameters $k$ and $\ell\geq dk$: problem \cref{MeasureSDPData} coincides with \cref{MeasureSDPExtended} if (i) a subset of the data points $x_i$ are \emph{unisolvent} for the space of degree-$\ell$ polynomials, and (ii) the map $f(x)$, or at least the Lie derivatives of degree-$k$ polynomials required by the EDMD step of our approach, are sampled exactly.
This finer result, however, is specific to deterministic polynomial maps. The proofs in \cref{ss:analysis-feasibility,ss:analysis-convergence}, instead, extend to generalizations of our SDP approach to continuous-time and/or stochastic dynamical systems, discussed in the next section. The only requirements for this extension are that the dynamics be governed by polynomial equations, and that approximate Lie derivatives built with EDMD converge to their exact counterparts.

\section{Extensions to Other Dynamical Systems}\label{sec:Extensions}

Our data-driven approach to discovering invariant measures can be applied to dynamical systems beyond those governed by discrete maps. The key observation is that the invariance condition in \cref{sec:InvMeas} involves the \emph{Lie derivative}
\begin{equation}
	\mathcal{L}g:= g \circ f - g
\end{equation}
of the test function $g$, which describes how $g$ changes along trajectories of system \cref{DiscreteDS}. As such, Lie derivatives can be defined for a much broader class of dynamical systems (see \cite[Section~3]{bramburger2023auxiliary}). Classical examples include:
\begin{itemize}[leftmargin=*]
	\item Dynamical systems governed by well-posed autonomous ordinary differential equations. Precisely, let $x(t)$ solve the differential equation $\frac{d}{dt} x = f(x)$. Then, the Lie derivative of any continuously differentiable function $g$ is given by
	\begin{equation}
		\mathcal{L}g =  f\cdot \nabla g.
	\end{equation}
	
	\item Discrete-time systems governed by stochastic maps. Precisely, let $x_{n+1} = f(\omega_n,x_n)$ be a stochastic process where the map $\omega \mapsto f(\omega,\cdot)$ is a random variable from a probability space $(\Omega,\mathcal{F},\pi)$ into the space of maps from $X$ to itself. For such a stochastic process, the Lie derivative acts on continuous functions $g$ by
	\begin{equation}
		\mathcal{L}g(x) = \int_\Omega g(f(\omega,x))\mathrm{d}\pi(\omega) - g(x).
	\end{equation} 

	\item Stochastic processes governed by stochastic differential equations. Precisely, let $X_t$ be the solution of $\mathrm{d} X_t = a(X_t)\mathrm{d} t + b(X_t)\mathrm{d} W_t$, where $W_t$ is a standard $k$-dimensional Wiener process and $a:\mathbb{R}^n\to \mathbb{R}^n$ and $b:\mathbb{R}^n\to \mathbb{R}^{n\times k}$ are given Lipschitz functions. Then, the Lie derivative of any twice-differentiable function $g$ is
	\begin{equation}\label{e:sde-lie-derivative}
		\mathcal{L}g = a \cdot \nabla g + \frac{1}{2}\langle b b^\intercal,\nabla^2 g \rangle,
	\end{equation}
	where $\nabla^2 g$ is the Hessian of $g$ and the angled brackets indicate the Frobenius (element-wise) inner product.  
\end{itemize}

In all of these cases, invariant measures are characterized by the condition that
\begin{equation}
	\int_X \mathcal{L}g \,\mathrm{d}\mu = 0
\end{equation}
for all functions $g$ for which the Lie derivative is defined. In addition, EDMD with polynomial dictionaries can be used to approximate the Lie derivative from data with guaranteed convergence as the amount of data and the size of the polynomial dictionaries increase~\cite{bramburger2023auxiliary}. One can then construct and solve the SDP \cref{MeasureSDPData}, obtaining approximations of invariant measures even for continuous-time or stochastic systems. The only difference compared to discrete-time deterministic maps is that the data snapshots $\{(x_i,z_i)\}_{i = 1}^m$  must be such that, for a fixed timestep $\tau>0$, $z_i$ is the state of the system at time $t+\tau$ when the state at time $t$ is $x_i$. The continuous-time Lie derivative is then approximated using the matrix
\begin{equation}
	L_{k\ell} = \frac{K_{k\ell} - \Theta_{k\ell}}{\tau},
\end{equation} 
where $K = QP^\dagger$, the matrices $P$ and $Q$ are as in \cref{EDMDmatrices}, and $\Theta_{k\ell}$ is the rectangular matrix satisfying $p_k(x) = \Theta_{k\ell} \,p_\ell(x)$. In the discrete-time stochastic setting one simply fixes $\tau = 1$.

\section{Examples}\label{s:examples}

We now illustrate the data-driven discovery of invariant measures on a series of examples of increasing difficulty. The first two examples demonstrate our approach on the logistic map (\cref{ss:logistic}) and on a stochastic double-well system (\cref{ss:double-well}). Invariant measures for these examples can be easily found by other methods and compared to our data-driven approximations. In \cref{ss:rossler}, instead, we apply our method to approximate the physical measure of the three-dimensional R\"ossler attractor and to extract unstable periodic orbits embedded in it. We do this by identifying discrete-time periodic orbits for a suitably defined Poincar\'e return map. This last example is purely data-driven, in the sense that no analytical expression for the Poincar\'e return map is available. For all examples, we implemented the SDP \cref{MeasureSDPData} using the MATLAB toolbox YALMIP \cite{lofberg2004yalmip} and solved it using MOSEK~\cite{mosek2015mosek}. Code to reproduce our results is available at \href{https://github.com/jbramburger/data-measures}{https://github.com/jbramburger/data-measures}. 

\subsection{The Logistic Map}\label{ss:logistic}

We begin by discovering invariant measures for the scalar logistic map,
\begin{equation}\label{Logistic}
	x_{n+1} = 2x_n - 1,
\end{equation}
on the interval $X=[-1,1]$.
Our goal is to illustrate the performance of our data-driven approach and highlight its ability to identify the system's physical measure as well as various ergodic measures that have an atomic structure. 

\subsubsection{Physical Measures}

The logistic map \cref{Logistic} is one of few nonlinear discrete dynamical systems for which an exact invariant measure is known \cite{jakobson1981absolutely}. The density of this invariant measure is
\begin{equation}\label{LogisticDensity}
	\rho_*(x) = \frac{1}{\pi\sqrt{1 - x^2}}.
\end{equation}
Here, we seek to `discover' this density from data collected by simulating $m$ iterations of \cref{Logistic} starting from $x_0 = 0.25$. We averaged the ensuing trajectory $\{x_n\}_{n=1}^m$ to obtain an approximate value $\tilde{y}_1$ for the first moment $y_1$ of the physical invariant measure, and we then implemented the SDP \cref{MeasureSDPData} with the objective function 
\begin{equation}
	F(y) = (y_1 - \tilde{y}_1)^2.
\end{equation}
Observe that the value of $\tilde{y}_1$ changes as the amount of data is increased and converges to the exact value $y_1=0$ as $m\to \infty$. 
For numerical stability, we used ChebFun \cite{driscoll2014chebfun} to represent polynomials in the Chebyshev basis $p_k(x) = \{T_0(x),T_1(x),\dots,T_k(x)\}$, rather than the monomial basis. We also took $\ell = 2k$, so $p_\ell(x) = \{T_0(x),T_1(x),\dots,T_{2k}(x)\}$, but nearly identical results are obtained with $\ell > 2k$ because this brings no notable improvement in the data-driven approximation of the Lie derivative of degree-$k$ polynomials \cite[Section~5.3]{bramburger2023auxiliary}.

Optimal solutions to \cref{MeasureSDPData} were used to construct signed polynomial densities $\rho_k$ for the underlying approximate invariant measure as explained in \cref{ss:recovery}. We fixed the reference measure $\pi$ to be the Lebesgue measure on $[-1,1]$ and the results for $k=5$, $10$ and $20$ are compared to the true invariant density $\rho_*$ in \cref{fig:LogisticMeas}. 
It is clear that the signed densities $\rho_k$ approximate $\rho_*$ well, but they also exhibit a highly oscillatory behaviour. Such oscillations are consistent with the fact that, as both the number $m$ of data snapshots and the polynomial degree $k$ become infinite, $\rho_k$ is expected to converge to $\rho_*$ only weakly in $L^1$ (cf. \cref{ss:recovery}), which does not imply pointwise convergence. Nevertheless, the moments of the approximate densities converge to exact values, so our approximations can be used to make meaningful statistical predictions.

\begin{figure}[t] 
	\centering
	\includegraphics[width = \textwidth]{./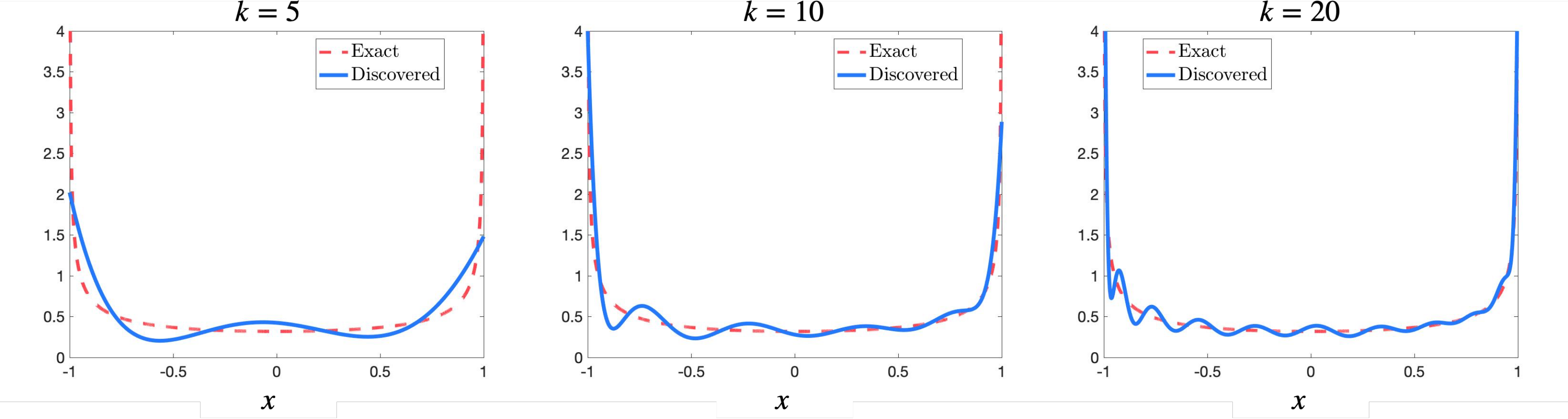}  
	\caption{Discovered densities (blue, solid) for the logistic map \cref{Logistic} using $m = 10^4$ data points compared to the exact density \cref{LogisticDensity} (red, dashed).}
	\label{fig:LogisticMeas}
\end{figure} 

To corroborate this claim, we examined the $L^1$ norm difference between the approximate and exact cumulative distribution functions,
\begin{equation}\label{LogisticDistributions}
	R_k(x) = \int_{-1}^x \rho_k(s) \mathrm{d}s
	\qquad\text{and}\qquad
	R_*(x) = \int_{-1}^x \rho_*(s)\mathrm{d}{s} =  \frac{1}{2} + \frac{1}{\pi}\arcsin(x).
\end{equation}
Indeed, the weak convergence of $\rho_k$ to $\rho_*$ is equivalent to the pointwise convergence of $R_k$ to $R_*$ at every point of continuity in $R_*$ \cite[Theorem~13.23]{klenke2013probability}. Since $R_*$ is continuous for all $x \in [-1,1]$ and $0 \leq R_k(x) \leq 1$ for all $x$ and $k$, the dominated convergence theorem guarantees that $R_k$ converges to $R_*$ in the $L^1$ norm.
The $L^1$ norm difference between $R_k$ and $R_*$ is reported in \cref{tbl:Logistic} for increasing polynomial degrees $k$ and increasing size $m$ of the training data. The table also reports the $L^1$ norm differences obtained by solving the SDP \cref{MeasureSDP}, which uses the exact Lie derivative, and the exact first moment value $\tilde{y}_1 = 0$. For $m \geq 10^4$ the data-driven results closely match those with exact knowledge of the system. The main limitation seems to be approximating the exact invariant measure using semidefinite programming with finite polynomial degree $k$.
This is likely due to the singular behaviour of the optimal density $\rho_*$ at the boundaries of the interval $[-1,1]$, which cannot be approximated well using polynomials of low degree.

\begin{table}[t]
	\centering
	\caption{$L^1$ norm difference between the cumulative distributions \cref{LogisticDistributions} of the discovered and exact densities. Numerical procedures use $m$ consecutive data points under the dynamics of the Logistic map seeded by the initial condition $x_0 = 0.25$ with an objective to fit the approximate first moment $\tilde{y}_1$ extracted from the data. The values $\tilde{y}_1$ are reported in the final column.} Exact values are computed using the exact Lie derivative and first moment $\tilde{y}_1 = 0$ in place of those that are estimated from data.
	\begin{tabular}{ r c cccc|cc}
		\toprule
		& $k=5$ & $k=10$ & $k=15$ & $k=20$ & $k=25$ & $\tilde{y}_1$  \\ [0.5ex]
		\midrule
		$m = 10^2$ & 0.04488 & 0.04401 & 0.04376 & 0.04336 & 0.04325 & -0.04123 \\ 
		$m = 10^3$ & 0.02717 & 0.01602 & 0.01474 & 0.01440 & 0.01389 &  -0.01339 \\
		$m = 10^4$ & 0.02872 & 0.00955 & 0.00531 & 0.00372 & 0.00285 & -0.00102 \\
		$m = 10^5$ & 0.02891 & 0.00959 & 0.00545 & 0.00371 & 0.00278 & -0.00035 \\
		\midrule
		Exact & 0.02902 & 0.00962 & 0.00556 & 0.00378 & 0.00280 & 0.0000    \\
		\bottomrule
	\end{tabular}
	\label{tbl:Logistic}
\end{table}

There are of course other ways to approximate the density of a physical measures from data, such as building a histogram. For demonstration, we take our density used in \Cref{tbl:Logistic} with $m = 10^3$ and $k = 20$ and compare it to the density approximated by the histogram constructed from the same data and organized into 101 bins. The resulting histogram is plotted in \Cref{f:histogram}, where it is compared to the exact density \eqref{LogisticDensity} and to the approximate density obtained with our SDP method and the same data. To evaluate the accuracy of the two approximate densities, we compare their moments against the exact values. Relative errors of these predictions are presented in \Cref{tbl:Histogram} for the first five nonzero moments ($y_2$, $y_4$, $y_6$, $y_8$, and $y_{10}$), and three higher-order moments ($y_{20}$, $y_{30}$, and $y_{40}$). Notice that while the relative errors are comparable for the lower-order moments, the difference in the methods is clearly observed in the approximation of the higher-order moments. In particular, the histogram method provides a poor approximation of the higher-order moments, nearly doubling the relative error of the method of this manuscript. Of course, better accuracy can be achieved by using more data, but our investigations reveal that our SDP-based approach can achieve greater accuracy, particularly for the higher-order moments.

\begin{figure}[t]
	\centering
	\includegraphics[width=0.75\linewidth]{./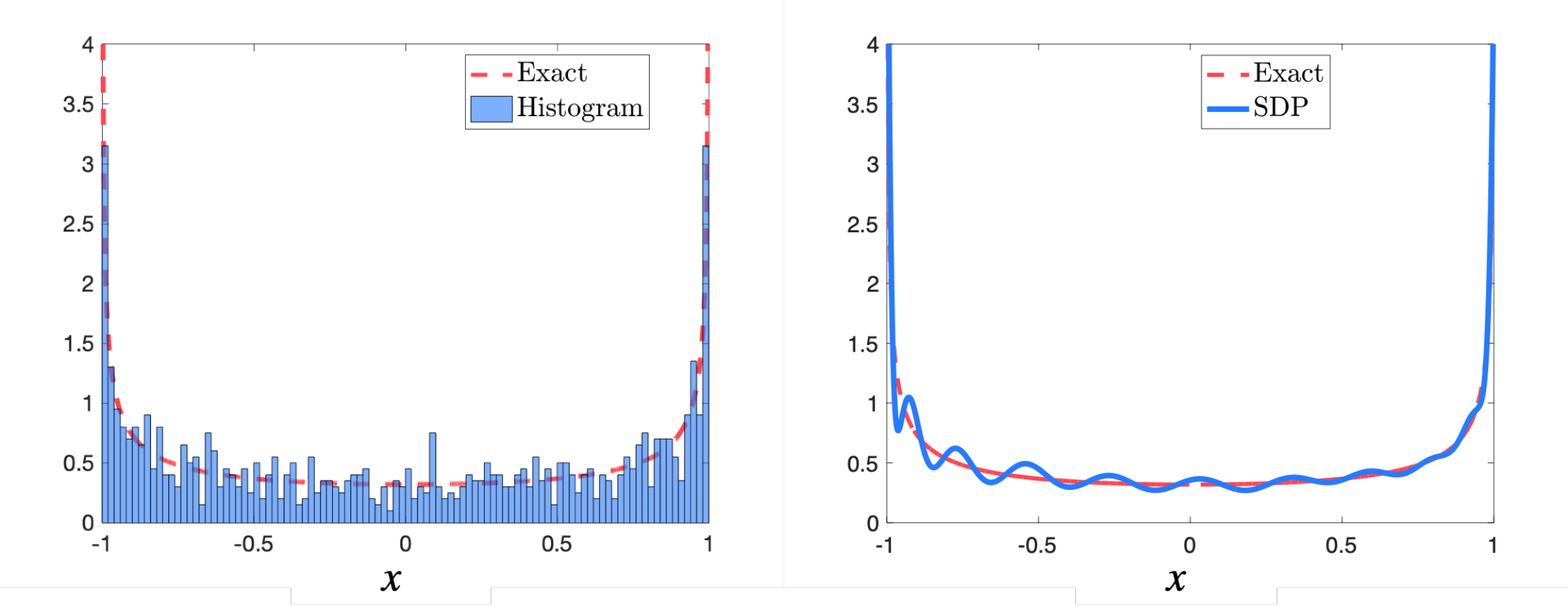}
	\caption{Left: Histogram approximation (blue) of the density of the physical measure \eqref{LogisticDensity} of the logistic map against the exact density (red, dashed). We use $m = 10^3$ datapoints along a single trajectory starting with $x_0 = 0.25$ and organize data into 101 bins. Right: The approximated density using the SDP methods of this manuscript (blue) against the true density (red, dashed) using the same datapoints as the left panel and taking $k = 20$.
	\label{f:histogram}}
\end{figure}	

\begin{table}[t] 
	\centering
	\caption{Relative errors in approximating the moments of \eqref{LogisticDensity} via the approximated densities from the semidefinite programming (SDP) method of \Cref{sec:InvMeasData} and from building a histogram (Histogram) from the data. Both methods use the same $m = 10^3$ datapoints, while the histogram organizes the data into 101 bins covering the domain $[-1,1]$. }
	\begin{tabular}{ r | c ccccccc}
		\toprule
		Moment & $y_2$ & $y_4$ & $y_6$ & $y_8$ & $y_{10}$ & $y_{20}$ & $y_{30}$ & $y_{40}$  \\ [0.5ex]
		\midrule
		SDP & 1.33\% & 2.23\% & 2.81\% & 3.25\% & 2.63\% & 5.04\% & 6.11\% & 7.02\%  \\ 
		Histogram & 1.51\% & 2.62\% & 3.49\% & 4.26\% & 4.99\% & 8.35\% & 11.78\% & 15.46\%   \\
		\bottomrule
	\end{tabular}
	\label{tbl:Histogram}
\end{table}

\subsubsection{Ergodic Measures}

The logistic map \cref{Logistic} has a chaotic attractor that is densely filled by an infinite number of unstable periodic orbits, each of which is associated with an ergodic measure with an atomic structure. As discussed in \cref{sec:Objectives}, moments of such ergodic measures can be approximated by solving the exact SDP \cref{MeasureSDP} or its data-driven approximation \cref{MeasureSDPData} with a linear objective function. Moreover, one can approximate these atomic measures using absolutely continuous ones as explained in \cref{ss:recovery}. In contrast, a data-based histogram cannot provide approximations of these atomic measures.

To illustrate this, we solved the SDP \cref{MeasureSDPData} for polynomial degrees $k = 5$,$10$ and $20$ and the linear cost function $F(y)=y_1$. The data snapshots are as for the discovery of physical measures above. \Cref{fig:LogisticAtomic} shows the densities of the Lebesgue-absolutely-continuous signed measures recovered from the SDP solution. These densities become increasingly localized near $x = -0.5$, which is a fixed point for the logistic map. This suggests that our approximations are converging (in a weak-star sense) to the invariant atomic measure supported on this fixed point. In fact, if the optimal solution $y$ of the SDP \cref{MeasureSDPData} is such that the moment matrix $M(y)$ satisfies a rank deficiency condition \cite[Theorem 5.20]{laurent2009sums}, then one can extract an atomic measure from $y$ \cite[Section~6.7]{laurent2009sums}. We refer interested readers to this work and the references therein for the technical details. For our logistic map example, this extraction procedure\footnote{Implemented in the function \texttt{extractminimizers} from the modified version of YALMIP available at: \href{https://github.com/aeroimperial-optimization/aeroimperial-yalmip}{https://github.com/aeroimperial-optimization/aeroimperial-yalmip}.} returns at atomic measure supported at $x\approx-0.5$ even for $k = 5$, where the approximate density in \cref{fig:LogisticAtomic} is far from being localized.

\begin{figure}[t] 
	\centering
	\includegraphics[width = \textwidth]{./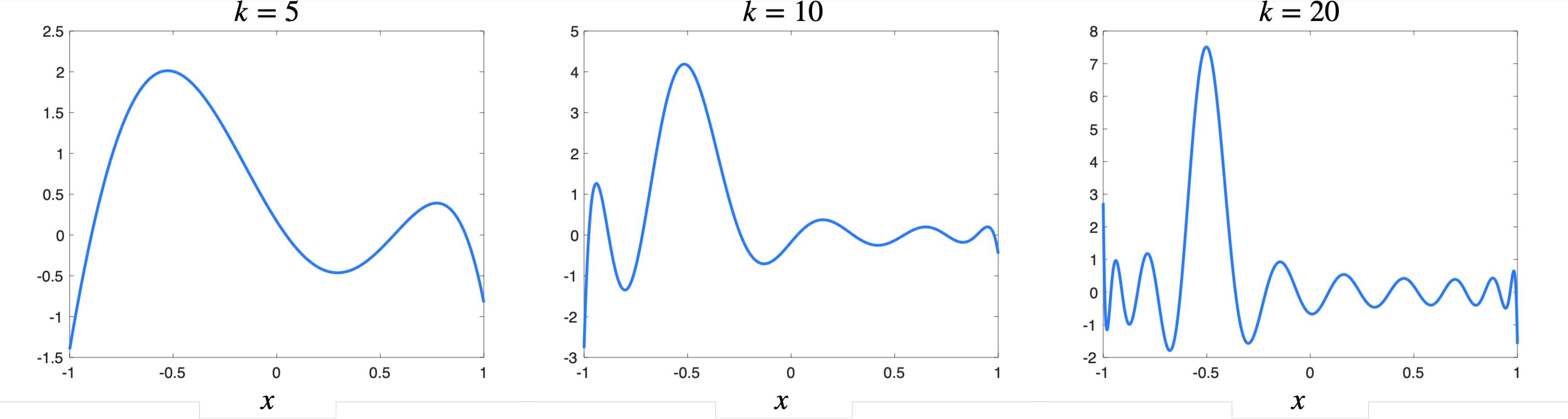}  
	\caption{Polynomial densities of Lebesgue-absolutely-continuous measures that approximates the atomic invariant measure supported on the fixed point $x = -0.5$ of the logistic map \cref{Logistic}. Results are for $m = 10^4$ data snapshots and polynomial degrees $k=5$, $10$ and $20$.}
	\label{fig:LogisticAtomic}
\end{figure}

In a similar way, by varying the linear optimization objective we are able to discover periodic points of the logistic map. For example, fixing $(m,k) = (10^4,5)$ and taking the optimization objective to be $F(y) = y_3$ we are able to extract from the optimal solution of the SDP \cref{MeasureSDPData} an atomic measure supported on the points $x \approx -0.93969$, $0.17365$ and $0.76604$, which approximate a period-3 orbit of the logistic map to four decimal places.  

\begin{remark}\label{rem:ergodic-vs-not} 
While linear objective functions are always optimized by at least one ergodic measure, our approach is not guaranteed to find it. Indeed, suppose two ergodic measures optimize the same objective. Then, solutions of the SDPs \cref{MeasureSDP,MeasureSDPData} may converge to moments of any convex combination of these ergodic measures, which remains optimal but is not ergodic. For instance, with $F(y) = y_5$ in the above example we discover an atomic measure supported on the fixed point $x = -0.5$ and on a period-4 orbit of the logistic map, which is a convex combination of the ergodic measures corresponding to each of these two invariant trajectories. However, in the absence of symmetry or other specific structure in the data, we expect such examples not to be generic.
\end{remark}

\subsection{A Stochastic Double-Well System}\label{ss:double-well}

For our next example we consider the stochastic differential equation
\begin{equation}\label{e:DoubleWell}
	\begin{aligned}
		\dot{x}_1 &= -16(x_1 + x_2)^3 + 2x_1 + 6x_2 + \sigma \xi_1(t),\\
		\dot{x}_2 &= -16(x_1 + x_2)^3 + 6x_1 + 2x_2  + \sigma \xi_2(t),
	\end{aligned}	
\end{equation}
where $\xi_1(t)$ and $\xi_2(t)$ are independent Gaussian white noise processes and $\sigma$ determines the noise intensity. The unique invariant measure associated to this equation is absolutely continuous with respect to the Lebesgue measure on $\mathbb{R}^2$ and has the density
\begin{equation}\label{e:double-well-exact-sol}
	\rho(x_1,x_2) = C \exp\left\{-\frac{[  4(x_1+x_2)^2 - 1 ]^2 + 4(x_1-x_2)^2}{2\sigma^2} \right\},
\end{equation}
where the constant $C$ is such that $\int_{\mathbb{R}^2}\rho(x_1,x_2)\mathrm{d}x_1\mathrm{d}x_2 = 1$. This density can be found by solving the stationary Fokker-Plank equation associated to \cref{e:DoubleWell}. As illustrated in the left-most panel of \cref{f:double-well-comparison}, $\rho$ exhibits two peaks at $(x_1,x_2)=(\frac14,\frac14)$ and $(x_1,x_2)=(-\frac14,-\frac14)$, which are the stable fixed points of the deterministic ($\sigma = 0$) counterpart of \cref{e:DoubleWell}. The peaks become increasingly localized as $\sigma$ decreases. In this example, we use our data-driven discovery approach to approximate this two-peak invariant density for $\sigma=0.75$. This value gives two clear peaks, which are however sufficiently "spread out" for $\rho$ to be approximated well by polynomials of relatively small degree. Smaller $\sigma$ could be considered at the cost of working with larger polynomial degrees (hence, larger computational cost) than those reported below.

\begin{figure}
	\centering
	\includegraphics[width=\linewidth,trim=1.5cm 0.75cm 0cm 0.75cm,clip]{./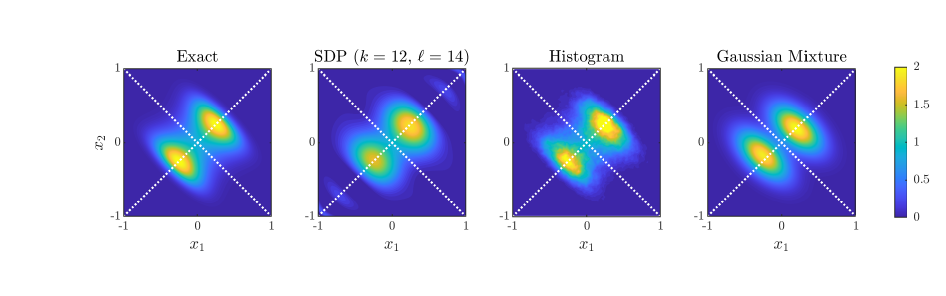}
	\caption{Left-most panel: Invariant density $\rho$ for the stochastic system in \cref{e:DoubleWell} for with $\sigma=0.75$. The formula for $\rho$ is given by \cref{e:double-well-exact-sol}. Other panels: Approximations to $\rho$ obtained from the dataset $\mathcal{D}_1$ with (in order) with our SDP approach, with a data-based histogram, and with a two-component Gaussian mixture fit to the data. In each panel, dotted white lines mark the reflection symmetry axes ($x_2=x_1$ and $x_2=-x_1$) for the exact invariant density.
	\label{f:double-well-comparison}}
\end{figure}

We consider two sets of data snapshots $(x(t),x(t+\tau))$ in $X=\mathbb{R}^2$, both collected by approximating solutions of \cref{e:DoubleWell} with the Euler--Maruyama scheme and timestep $\tau = 10^{-4}$. The first dataset, referred to as $\mathcal{D}_{500}$ in what follows, was obtained from $500$ realizations of the stochastic process, integrated for $10^4$ timesteps starting from $500$ initial conditions sampled from the uniform distribution on the square $[-\frac12,\frac12]$. The second dataset, denoted by $\mathcal{D}_1$, was instead obtained from a single realization with $5\times 10^6$ timesteps, again started from a random initial condition on the square $[-\frac12,\frac12]$. Datapoints $(x(t),x(t+\tau))$ not in the unit box $[-1,1]^2$ were dropped from both datasets to ensure a numerically stable evaluation of Chebyshev polynomials (see the next paragraph). As a result, the dataset $\mathcal{D}_{500}$ contains $m=4\,999\,759$ snapshots from $500$ process realizations, while the dataset $\mathcal{D}_{1}$ has $m=4\,998\,925$ snapshots from a single, but longer, process realization. The datasets are included in the code repository accompanying this article.

For each dataset, we solved the SDP \cref{MeasureSDPData} for increasing polynomial degree $k$ and polynomial EDMD dictionaries of degrees $\ell = k$, $k+2$ and $k+4$. As before, for numerical stability we represent polynomials using the Chebyshev basis, rather than the monomial basis. We also found that the quality of approximate Lie derivatives could be considerably improved if we computed the matrix $L_{k\ell}$ with an iterative procedure similar to the SINDy method \cite{SINDy}. Briefly, for each row of the matrix, we find the entry with largest magnitude, set to zero entries with magnitude smaller than 0.1\% of this value, and re-optimize the others. The process can be repeated until the matrix $L_{k\ell}$ stabilizes. Finally, the objective function for the SDP \cref{MeasureSDPData} was set to $F(y)= -2 - y_{20} - y_{02}$, which corresponds to looking for the invariant measure that maximizes the stationary expectation of the observable function $x_1^2 + x_2^2$. Results for different objective functions (omitted for brevity) are similar, but are not identical even though \cref{e:DoubleWell} admits a unique invariant measure. This is expected because the constraints of \cref{MeasureSDPData} do not characterize invariant measures exactly.

\begin{figure}
	\centering
	\includegraphics[width=\linewidth,trim=2cm 0.5cm 1cm 0cm,clip]{./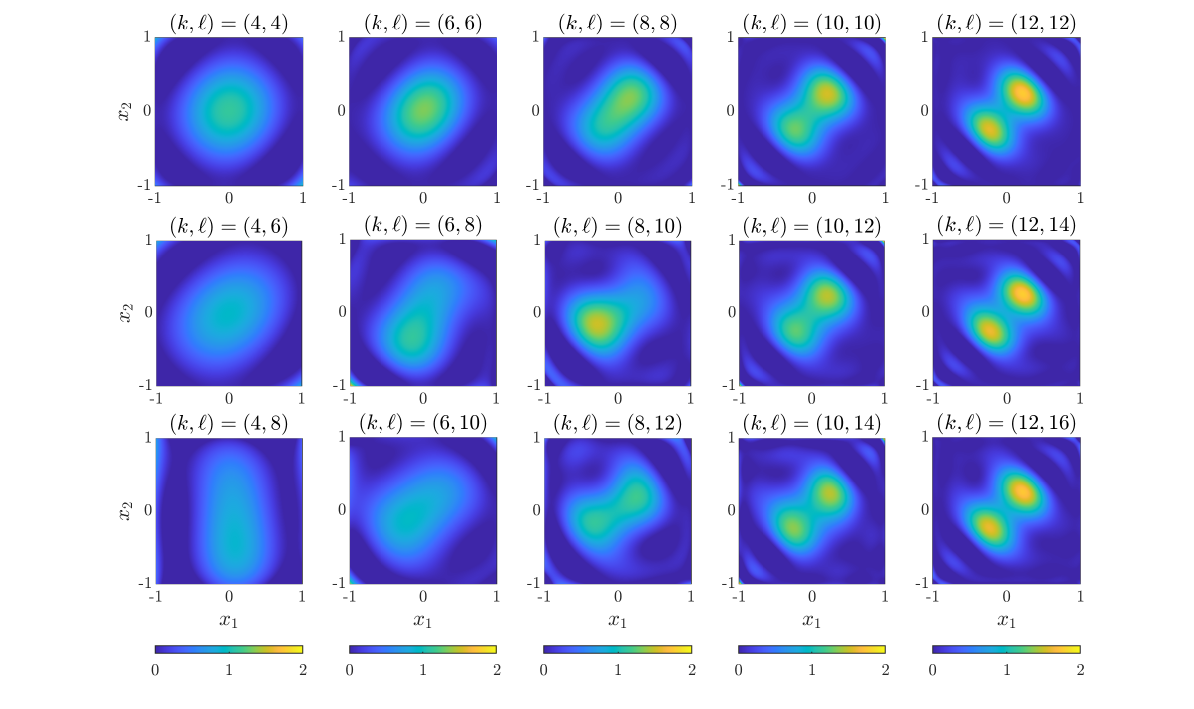}
	\caption{Data-driven approximations of the invariant density for the stochastic double-well system \cref{e:DoubleWell}, recovered from the solution of the SDP \cref{MeasureSDPData}. Different panels correspond to different choices of the polynomial degrees $k$ and $\ell$, which are indicated in each case. All plots are for the dataset $\mathcal{D}_{500}$.
	\label{f:double-well-multi-traj}}
\end{figure}

\begin{figure}
	\centering
	\includegraphics[width=\linewidth,trim=2cm 0.35cm 1cm 0cm,clip]{./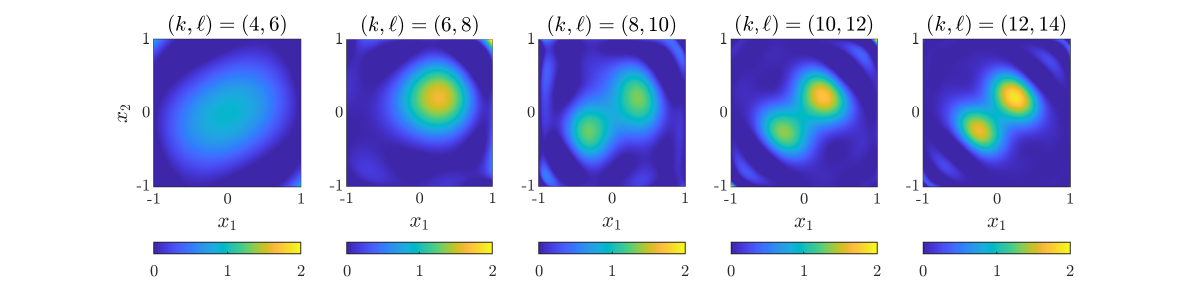}
	\caption{Data-driven approximations of the invariant density for the stochastic double-well system \cref{e:DoubleWell}, recovered from the solution of the SDP \cref{MeasureSDPData}.
	Different panels correspond to different choices of the polynomial degrees $k$ and $\ell$, which are indicated in each case. All plots are for the dataset $\mathcal{D}_{1}$.
	\label{f:double-well-single-traj}}
\end{figure}

\begin{table}
	\centering
	\caption{Stationary expectations of bivariate Chebyshev polynomials $T_\alpha(x_1)T_\beta(x_2)$ for the stochastic system \eqref{e:DoubleWell}, predicted using (i) the approximate densities found with the SDP method of \Cref{sec:InvMeasData}, (ii) a histogram, and (iii) a two-component Gaussian mixture model. Results are for the dataset $\mathcal{D}_1$ and all values $\alpha,\beta$ such that $\alpha+\beta \in \{2,4,6\}$. Errors are with respect to the exact expectation values $\int_{\mathbb{R}^2} T_\alpha(x_1)T_\beta(x_2) \rho(x_1,x_2)\mathrm{d}x_1\mathrm{d}x_2$, calculated using the exact invariant density $\rho$ from \cref{e:double-well-exact-sol} and reported in the first column.
	\label{table:moments-2d}}
	\small
	\begin{tabular}{c c r c rr c rr c rr}
		\toprule
		&& \multicolumn{1}{c}{Exact} && \multicolumn{2}{c}{SDP} 
		&& \multicolumn{2}{c}{Histogram} && \multicolumn{2}{c}{Gaussian Mixture}\\
		\cline{5-6}\cline{8-9}\cline{11-12}
		\\[-2ex]
		&& \multicolumn{1}{c}{Value} 
		&& \multicolumn{1}{c}{Value} & \multicolumn{1}{c}{Error}
		&& \multicolumn{1}{c}{Value} & \multicolumn{1}{c}{Error}
		&& \multicolumn{1}{c}{Value} & \multicolumn{1}{c}{Error}
		\\\midrule
$T_{2}(x_1) T_{0}(x_2)$ && -0.825 && -0.816 &    1.1\% && -0.817 &    1.0\% && -0.796 &    3.6\% \\
$T_{1}(x_1) T_{1}(x_2)$ &&  0.017 &&  0.019 &    9.5\% &&  0.019 &   10.4\% &&  0.015 &   14.7\% \\
$T_{0}(x_1) T_{2}(x_2)$ && -0.825 && -0.840 &    1.8\% && -0.841 &    1.9\% && -0.842 &    2.0\% \\
$T_{4}(x_1) T_{0}(x_2)$ &&  0.453 &&  0.431 &    4.9\% &&  0.432 &    4.6\% &&  0.390 &   13.8\% \\
$T_{3}(x_1) T_{1}(x_2)$ && -0.049 && -0.054 &   10.0\% && -0.054 &   10.3\% && -0.043 &   11.6\% \\
$T_{2}(x_1) T_{2}(x_2)$ &&  0.668 &&  0.673 &    0.8\% &&  0.674 &    1.0\% &&  0.658 &    1.4\% \\
$T_{1}(x_1) T_{3}(x_2)$ && -0.049 && -0.051 &    5.3\% && -0.052 &    5.7\% && -0.041 &   16.8\% \\
$T_{0}(x_1) T_{4}(x_2)$ &&  0.453 &&  0.488 &    7.7\% &&  0.490 &    8.1\% &&  0.500 &   10.4\% \\
$T_{6}(x_1) T_{0}(x_2)$ && -0.141 && -0.118 &   16.3\% && -0.118 &   16.1\% && -0.087 &   38.1\% \\
$T_{5}(x_1) T_{1}(x_2)$ &&  0.066 &&  0.073 &    9.8\% &&  0.072 &    9.2\% &&  0.058 &   12.3\% \\
$T_{4}(x_1) T_{2}(x_2)$ && -0.337 && -0.326 &    3.0\% && -0.328 &    2.5\% && -0.296 &   12.0\% \\
$T_{3}(x_1) T_{3}(x_2)$ &&  0.137 &&  0.146 &    6.4\% &&  0.146 &    6.2\% &&  0.118 &   13.8\% \\
$T_{2}(x_1) T_{4}(x_2)$ && -0.337 && -0.362 &    7.5\% && -0.364 &    8.2\% && -0.364 &    8.3\% \\
$T_{1}(x_1) T_{5}(x_2)$ &&  0.066 &&  0.068 &    2.2\% &&  0.068 &    2.5\% &&  0.054 &   18.5\% \\
$T_{0}(x_1) T_{6}(x_2)$ && -0.141 && -0.172 &   22.0\% && -0.174 &   23.4\% && -0.200 &   42.0\% \\
		\bottomrule
	\end{tabular}
\end{table}

\Cref{f:double-well-multi-traj} shows approximate signed invariant densities for \cref{e:DoubleWell} obtained with the dataset $\mathcal{D}_{500}$. Results for the dataset $\mathcal{D}_{1}$ are similar, so we report only a subset of them in \cref{f:double-well-single-traj}. In both cases, the approximate signed invariant densities were recovered from the optimal SDP solutions as explained in \cref{ss:recovery}, taking the reference measure $\pi$ to be the Lebesgue measure on $[-1,1]^2$. Just like the true invariant density $\rho$ in \cref{e:double-well-exact-sol}, the signed densities localize near the points $(\frac14,\frac14)$ and $(-\frac14,-\frac14)$ as the polynomial degree $k$ is raised. For sufficiently large values of the parameters $k$ and $\ell$, moreover, they approximately satisfy the reflection symmetry of the exact invariant density $\rho$ with respect to the lines $x_2=x_1$ and $x_2=-x_1$. The lack of exact symmetry in the figures can be attributed to a lack of symmetry in our datasets because, as illustrated by \cref{f:double-well-comparison}, the same issue arises also if we approximate $\rho$ by fitting a probability histogram or a two-component Gaussian mixture model to the data. (This can be done with the MATLAB functions \texttt{histogram2} and \texttt{fitgmdist}, respectively, which we used with default parameters.) Incidentally, as evidenced by \cref{table:moments-2d}, the invariant densities produced by the SDP approach and by fitting a histogram predict the stationary expectation of bivariate Chebyshev polynomials $T_\alpha(x_1)T_\beta(x_2)$ with similar accuracy. In contrast, the two-component Gaussian mixture model has much larger prediction errors.

To conclude this example, we point out that the results for $\ell=k+2$ and $\ell=k+4$ in \cref{f:double-well-multi-traj} become almost indistinguishable as $k$ increases. This is to be expected because the right-hand side of \cref{e:DoubleWell} is a cubic polynomial, so \cref{e:sde-lie-derivative} and the convergence results in \cite{bramburger2023auxiliary} guarantee that the choice $\ell = k+2$ is large enough to recover exactly the Lie derivative of degree-$k$ polynomials as the number of data snapshots tends to infinity.

\subsection{The R\"ossler System}\label{ss:rossler}

Having demonstrated our data-driven approach to discovering invariant measures on two relatively simple examples, we now showcase its usefulness in providing insight into the statistics of a chaotic dynamical system. Precisely, we consider the chaotic R\"ossler system
\begin{equation}\label{Rossler}
	\begin{split}
		\dot x_1 &= -x_2 - x_3, \\
		\dot x_2 &= x_1 + 0.1x_2, \\
		\dot x_3 &= 0.1 + x_3(x_1 - 18),
	\end{split}
\end{equation}
and show that our data-driven approach can produce good approximations of the statistics on the attractor, as well as enable the discovery of a large number of unstable periodic orbits. In both cases, the dataset consists of a numerically integrated trajectory of \cref{Rossler} starting from $(x_1,x_2,x_3)=(0,-20,0)$ at $t = 0$, with data collected every $\tau = 0.005$ time units up to $t = 1000$. We look for an invariant measure in the set $X = [-30,30]\times [-30,30]\times [0,60]$, which contains the attractor. For computations, variables are scaled so that $X$ is the unit cube $[-1,1]^3$, but results are plotted in the original coordinates of \cref{Rossler}.

\subsubsection{Physical Measures and Attractor Statistics}
First, we approximate the physical measure on the chaotic attractor by solving \cref{MeasureSDPData} with polynomial degrees $(k,\ell) = (14,15)$ and optimization objective 
\begin{equation}\label{RosslerObj}
	F(y) = \frac{(y_{100} - \tilde y_{100})^2}{\tilde y_{100}^2} + \frac{(y_{010} - \tilde y_{010})^2}{\tilde y_{010}^2} + \frac{(y_{001} - \tilde y_{001})^2}{\tilde y_{001}^2}.
\end{equation} 
The values $\tilde y_{100}$, $\tilde y_{010}$ and $\tilde y_{001}$ are the approximate moments of $x_1$, $x_2$ and $x_3$, respectively, which we obtained from the training data via \cref{ApproxMoment}. We used the method of \cref{ss:recovery} with the reference measure $\pi$ fixed to the Lebesgue measure on $X$ in order to recover an absolutely-continuous measure with signed polynomial density of degree $k=14$ that approximates the attractor's physical measure.

\begin{figure}[t] 
	\centering
	\includegraphics[width = \textwidth]{./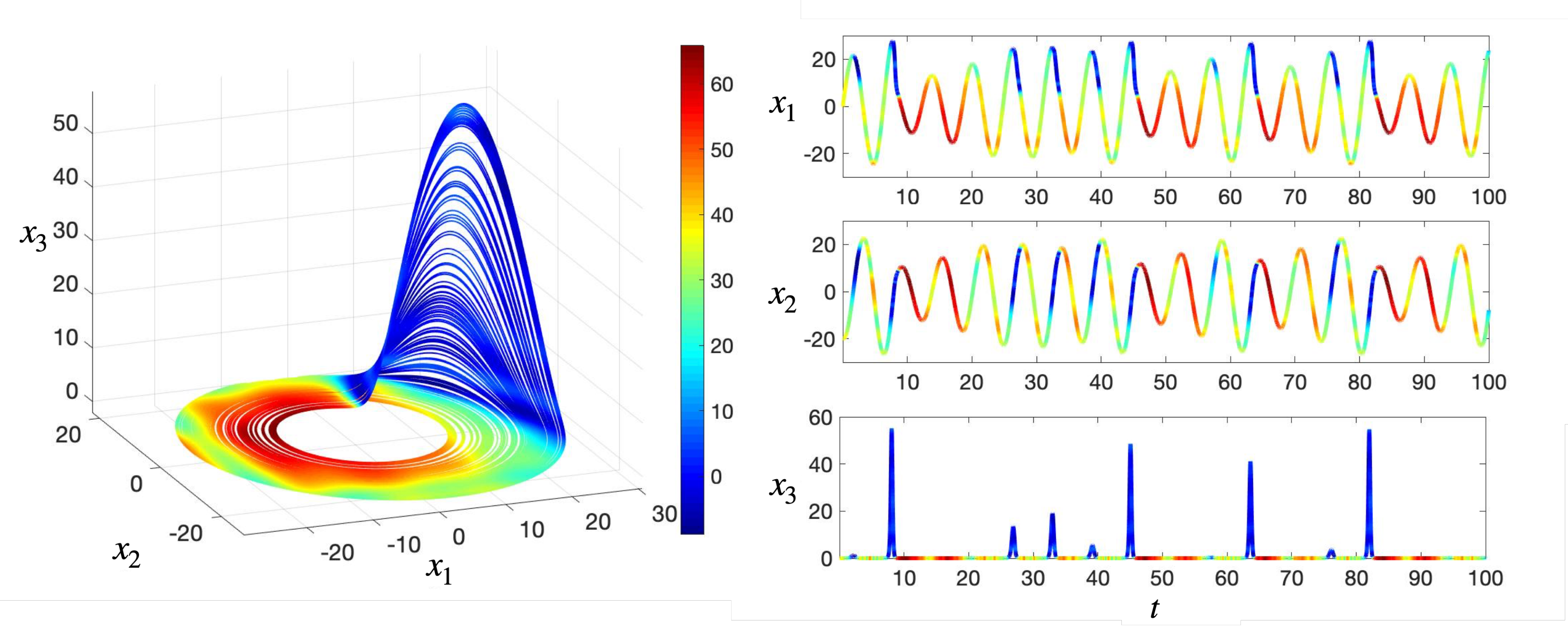}  
	\caption{Simulated data of the R\"ossler system \cref{Rossler} coloured according to the value of the discovered approximate density using $(k,\ell) = (14,15)$ and the objective \cref{RosslerObj}. Data on the chaotic attractor in $(x_1,x_2,x_3)$-space is on the left, while timeseries plots for each variable are on the right.}
	\label{fig:RosslerRainbow}
	\end{figure} 

The left panel in \cref{fig:RosslerRainbow} shows the training data points on the system's attractor, coloured according to the value of the discovered signed density. The large upward swings in $x_3$ along the attractor are identified as rare events, which is confirmed by the three timeseries plots on the right of \cref{fig:RosslerRainbow}. Moreover, the highest density component is in the region $x_1 \leq 0$, through which all trajectories seem to be funnelled as they circle around the attractor. Finally, all moments of the discovered density up to degree 2 agree with time averages integrated up to $t = 10^6$ to within a $13\%$ error, showing that we can make accurate statistical predictions even for moments not included in the objective function.

\subsubsection{Discovery of Unstable Periodic Orbits}
Next, we turn to the discovery of UPOs for the R\"ossler system through the analysis of a Poincar\'e return map. 
This is a truly data-driven example because the Poincar\'e return map for \cref{Rossler} does not have a closed-form analytical expression. Yet, analysing it is convenient because its UPOs have an atomic structure and, therefore, can more easily be detected within the polynomial-based convex optimization framework of \cref{sec:InvMeas,sec:InvMeasData}.

Numerical evidence suggests that the R\"ossler system has the advantageous property that points on the attractor with $x_1 = 0$ also have $x_3 = 0$. Therefore, to define the Poincar\'e return map we choose the Poincar\'e section to be the plane $x_1 = 0$ with the crossing condition $\dot x_1 > 0$ and only track the value of $x_2$ at each crossing. We then simulate \cref{Rossler} from time $t = 0$ to time $t = 10^6$ starting from $(x_1,x_2,x_3)=(0,-20,0)$ and measure the values of $x_2$ every time the simulated trajectory crosses the Poincar\'e section. This produces $811$ values $x_{2,i}$, $i=1,\ldots,811$, which we organize in $m=810$ data snapshots $(x_{2,i},x_{2,i+1})$ corresponding to measurements of the Poincar\'e return map. These datapoints are plotted in the top-left panel of \cref{fig:Rossler_Pmap}, which reveals that the Poincar\'e map is unimodal. However, the exact mapping is unknown and can only be estimated from data.   

\begin{figure}[t] 
\centering
\includegraphics[width = \textwidth]{./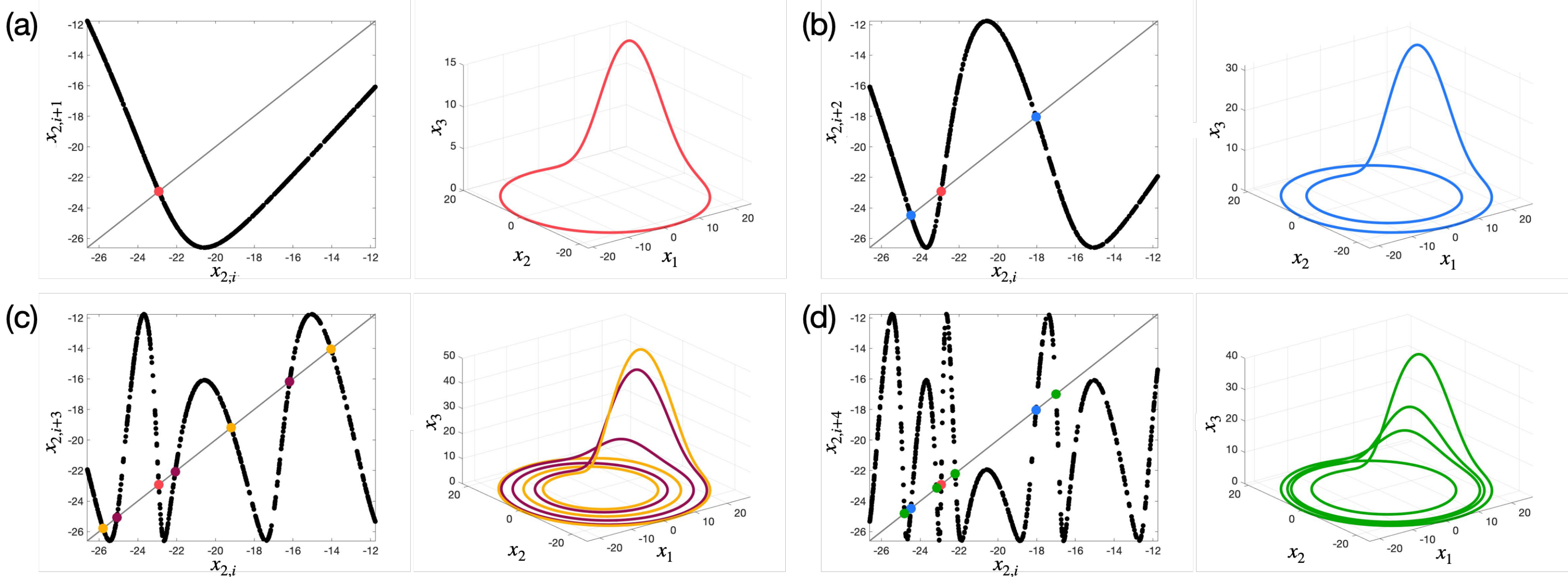}  
\caption{Poincar\'e map data and UPOs for the R\"ossler system \cref{Rossler}. Each panel has two subfigures. \emph{Left subfigure:} Data points on the Poincar\'e section, plotted as $(x_{2,i},x_{2,i+n})$ to represent measurements of the $n$th iterate of the Poincar\'e map with (a) $n = 1$, (b) $n = 2$, (c) $n = 3$, and (d) $n = 4$. Intersections with the diagonal $(x_2,x_2)$ correspond to period-$n$ fixed points of the Poincar\'e map, which are coloured according to their minimal period. \emph{Right subfigure:} UPOs embedded in the continuous-time R\"ossler attractor corresponding to the period-$n$ UPOs of the Poincar\'e map.}
\label{fig:Rossler_Pmap}
\end{figure} 
 
The UPOs of the R\"ossler attractor are in one-to-one correspondence with the UPOs of the Poincar\'e map. We thus attempt to extract the latter by discovering invariant atomic measures from the data collected in the Poincar\'e section. For accuracy, we solve the SDP \cref{MeasureSDPData} with polynomial degrees $(k,\ell) = (20,80)$, meaning that we are approximating the Lie derivative of polynomials in $x_2$ up to degree $k=20$ using polynomials up to degree $\ell=80$. Loosely speaking, this corresponds to assuming the Poincar\'e map is well approximated by a quartic polynomial map. As before, to improve numerical stability we represent polynomials using the Chebyshev basis, rather than the monomial basis. Then, to extract the periodic points from the Poincar\'e map we run the optimization procedure \cref{MeasureSDPData} with 1000 randomized linear objective functions. We are able to extract periodic points of periods 1 through 19, with the low-period points plotted in \cref{fig:Rossler_Pmap}. To confirm the accuracy of our method, the figure also shows these points plotted against the forward iterates of the Poincar\'e map. Observe that, as expected for periodic points, they lie precisely along the $(x_2,x_2)$-diagonal. Importantly, by counting the intersections of the Poincar\'e map data with the diagonal, we conclude we have extracted \emph{all} UPOs of the Poincar\'e map with period up to at least 4.   

\begin{figure}[t] 
\centering
\includegraphics[width = \textwidth]{./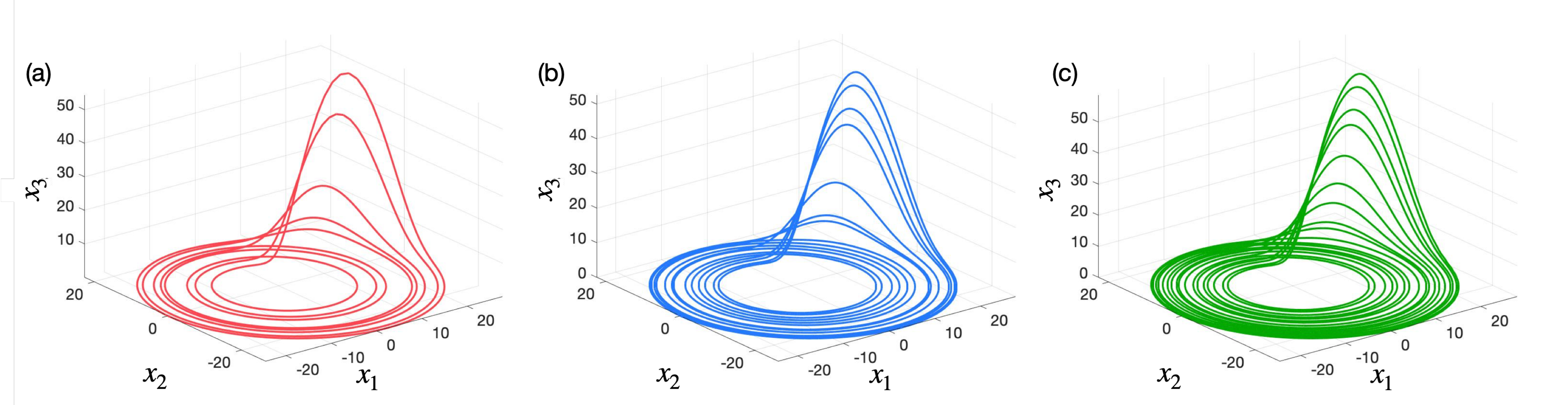}  
\caption{A sample of long period UPOs of the R\'ossler attractor. Plotted UPOs intersect the Poincar\'e section (a) 8 times, (b) 13 times, and (c) 17 times.}
\label{fig:Rossler_Long}
\end{figure} 

The periodic points in the Poincar\'e section can be further used as initial conditions for root-finding methods that compute the corresponding UPOs for the continuous-time R\"ossler system (see \cite[Section~2.3]{bramburger2021deep} for details).
\Cref{fig:Rossler_Pmap} shows all UPOs that intersect the Poincar\'e section up to four times. \Cref{fig:Rossler_Long} provides a small sample of longer UPOs that we are able to extract via the same process. 

A valid alternative to our data-driven discovery of atomic invariant measures for the Poincar\'e map would be to perform model identification on the Poincar\'e section data \cite{bramburger2021data,bramburger2020poincare,bramburger2021deep} and then numerically extract the periodic points from the discovered symbolic mapping. This can perform well for low-period UPOs, but small errors in the model discovery process accumulate through iterations of the discovered map, making it difficult (if not impossible) to identify UPOs with long period \cite{bramburger2021data}. The advantage of our data-driven approach is that we do not rely on iterates of an approximate Poincar\'e map to identify its periodic points, so model discovery errors do not accumulate in the same way.

\section{Conclusions}\label{s:conclusion}

In this article, we have described a convex optimization approach to approximate invariant measures for dynamical systems entirely from data. More precisely, we have shown that the model-based convex optimization framework for the approximation of invariant measures from~\cite{Korda2021invariant} can be implemented in an entirely data-driven manner by replacing Lie derivatives with data-driven approximations obtained with EDMD. Our approach can be applied to a broad range of dynamical systems, including those governed by deterministic or stochastic differential equations and maps. Moreover, and crucially from a practical point of view, the implementation of our method is the same irrespective of whether the underlying dynamics are deterministic or stochastic.

Compared to alternative data-driven approaches, such as Ulam's method or the EDMD-based strategy in \cite{eDMDconv2}, our method has the advantage of enabling one to target a particular invariant measure through the choice of a suitable optimization objective. We have demonstrated through a series of examples that one can successfully approximate the physical measure of a chaotic attractor, as well as ergodic invariant measures associated to different periodic orbits embedded within it. In particular, the success in discovering UPOs for the R\"ossler system suggests that our computational approach can be extremely useful whenever UPOs are of interest. One example of practical relevance is satellite mission design, since orbits that shadow UPOs require minimal use of propellant. Orbital dynamics models such as the reduced three-body system, which has a four-dimensional state space but a Hamiltonian structure that allows for a two-dimensional Poincar\'e section, are only marginally more complicated than the R\"ossler system studied in \cref{ss:rossler}. We believe they can be studied successfully with our method, although this remains to be confirmed.

Finally, although we have demonstrated that our approach works well in practice, questions about its theoretical convergence remain. Known convergence results for EDMD approximations of Lie derivatives \cite{bramburger2023auxiliary} provide only a heuristic justification for the good performance of our approach. Rigorous statements require proving that optimizers of the SDP \cref{MeasureSDPData} converge to optimizers of \cref{MeasureLP} as the number of data snapshots $m$ and the polynomial degrees $k$ and $\ell$ of the EDMD dictionaries increases. We have done this in \cref{sec:Convergence} when the data is governed by polynomial maps satisfying suitable technical conditions. For general nonlinear maps, it should still be relatively straightforward to study the limit of infinite data ($m\to\infty$) for fixed $k$ and $\ell$. It seems however much harder to take $\ell\to\infty$ for fixed $k$, as this requires studying the fine convergence properties of EDMD-based  polynomial approximations to non-polynomial Lie derivatives. The main technical challenges involved in this analysis are absent from our study of polynomial maps in \cref{sec:Convergence} because in this case the infinite-data limit of the matrix $L_{k\ell}$ coincides with the matrix $A_{k\ell}$ in \cref{MeasureSDPExtended} for all $\ell \geq dk$, so our data-driven approach recovers (a straightforward generalization of) the model-based strategy from~\cite{Korda2021invariant} for a finite, albeit unknown, value $\ell$. For general dynamics governed by non-polynomial maps, however, one is forced to take $\ell$ to infinity. Identifying which problem \cref{MeasureSDPData} tends to in this limit and studying how this limiting problem behaves for increasing $k$ remain two important open questions.

\section*{Acknowledgments}
JB was supported by an NSERC Discovery Grant. GF is grateful to Mayur Lakshmi for pointing out the relevance of the methods in this paper for orbital mechanics.

\bibliographystyle{abbrv}
\bibliography{./references.bib}

\end{document}